\documentclass[12pt]{article}

\usepackage{latexsym,epsfig}
\usepackage{amsmath,amsthm,amssymb,verbatim}
\usepackage{tikz}
\usepackage{geometry}

\input{xy}
\xyoption{all}

\usepackage{url}
\usepackage{color}
\usepackage[colorlinks,citecolor=blue]{hyperref}

\numberwithin{equation}{section} 
\theoremstyle{plain}
\newtheorem*{thm1}{Theorem}
\newtheorem{thm}{Theorem}
\newtheorem{lemma}[thm]{Lemma}
\newtheorem{cor}[thm]{Corollary}
\newtheorem{prop}[thm]{Proposition}
\newtheorem{qst}[thm]{Question}

\theoremstyle{definition}
\newtheorem*{dfn}{Definition}
\newtheorem*{dfns}{Definitions}
\newtheorem*{remark}{Remark}

\newtheorem*{example}{Example}
\newtheorem*{claim}{Claim}


\newcommand{\co}{\colon\thinspace}
\newcommand{\inv}[1]{#1^{-1}}

\newcommand{\bound}{\partial}

\newcommand{\Q}{\mathbb{Q}}
\newcommand{\R}{\mathbb{R}}
\newcommand{\Z}{\mathbb{Z}}
\newcommand{\N}{\mathbb{N}}
\newcommand{\C}{\mathbb{C}}

\newcommand{\pie}{\pi_1}

\newcommand{\SL}{\mathrm{SL}}

\newcommand{\tr}{\mathrm{tr}\,}

\newcommand{\norm}[1]{ \left| \left| #1 \right| \right|}

\newcommand{\rk}{\mathrm{rk}\,}

\newcommand{\aut}[1]{\mathrm{Aut}(#1)}

\begin{document}

\title{Modules, fields of definition, and the Culler--Shalen norm}

\author{Charles Katerba}

\maketitle

\abstract{Culler--Shalen theory uses the algebraic geometry of the $\SL_2( \C)$-character variety of a 3-manifold to construct essential surfaces in the manifold.  There are module structures associated to the coordinate ring of the character variety that are intimately related to essential surface construction.  When these modules are finitely generated, we derive a formula for their rank that incorporates the variety's field of definition and the Culler--Shalen norm. }  

\section{Introduction}
\label{sec:intro}

Over the past 35 years, character varieties have been a powerful tool for understanding the topology of 3-manifolds.   For instance, when a 3-manifold $N$ has a single torus boundary component, there is a norm called the Culler--Shalen norm on $H_1( \bound N , \R) \cong \R^2$  that is defined using $X(N)$ and, in a rough sense, measures the complexity of Dehn fillings of $N$.  This norm played a prominent role in the resolution of the Cyclic Surgery and Finite Filling conjectures (cf. \cite{BZ2}, \cite{BZ1}, and \cite{CGLS}). 

Much of the character variety's utility comes from the ability to associate essential surfaces in $N$ to the ideal points of curves in $X(N)$ (cf. \cite{CS1} and \cite{S}). Essential surfaces and the boundary slopes of $N$ that arise in this fashion are said to be detected by $X(N)$.  Chesebro noticed a connection between an infinite collection of module structures on the coordinate ring of an irreducible component of $X(N)$ and the detection of essential surfaces by $X(N)$.  

More specifically, take an irreducible component $X \subseteq X(N)$.  Each element $\gamma \in \pie ( N )$ determines a regular \textit{trace function} $I_\gamma$ on $X$ by evaluating each character at $\gamma$.   Let $T(X)$ denote the subring of the coordinate ring $\C[X]$ of $X$ generated by the trace functions. For each subring $R \subseteq \C$, let $T_R ( X)$ be the $R$-algebra generated by $T(X)$ in $\C[X]$.   Notice that $R[ I_\gamma]$ is a subring of $T_R(X)$ so that $T_R(X)$ can be regarded as a $R[I_\gamma]$-module for any $\gamma \in \pie ( N )$.   

Since trace is invariant under inversion and conjugation, the trace functions are well-defined for slopes on $\bound N$. If $\mathcal{S}$ is the set of slopes, define a function $\rk_X^R \co \mathcal{S} \to \N \cup \{ \infty \}$ by setting $\rk_X^R ( \alpha )$ equal to the rank of $T_R(X)$ as a $R[ I_\alpha ]$-module. This paper concerns the following theorem of Chesebro relating the rank functions to essential surfaces detected by $X(N)$. 

\begin{thm}{\cite[Theorem 1.2]{C}}
Let $X$ be an irreducible component of $X(N)$ and $R \in \{ \Z, \Q, \C \}$. 
\begin{enumerate}
\item $X$ detects a closed essential surface if and only if $\rk_X^R$ is constant with value $\infty$. 
\item Otherwise $\rk_X^R ( \alpha ) = \infty $ if and only if $X$ detects the slope $\alpha$. 
\end{enumerate}
\label{thm:cheese}
\end{thm}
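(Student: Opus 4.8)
The plan is to convert the algebraic statement about module ranks into a statement about the behavior of trace functions at ideal points, and then feed that into the Culler--Shalen correspondence of \cite{CS1} and \cite{S}. I treat the main case in which $X$ is a curve; the case $\dim X = 0$ (where $\rk_X^R \equiv 1$ and nothing is detected) and the case $\dim X \geq 2$ (where every fiber of $I_\alpha$ is positive-dimensional, so $\rk_X^R \equiv \infty$, consistent with part (1) since such components detect a closed essential surface) are handled by direct inspection. The crux is the following reduction.
\begin{claim}
For $R = \C$ and $X$ a curve, $\rk_X^\C(\alpha) = \infty$ if and only if $I_\alpha$ is constant or $I_\alpha$ is bounded at some ideal point of the smooth projective model $\wt X$ of $X$.
\end{claim}
To prove this I would compute $\rk_X^\C(\alpha) = \dim_{\C(I_\alpha)} \bigl( T_\C(X) \otimes_{\C[I_\alpha]} \C(I_\alpha) \bigr)$, which is finite exactly when $T_\C(X)$ is integral over $\C[I_\alpha]$. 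Since $\C[X]$ is generated by trace functions, $T_\C(X)$ and $\C[X]$ share the field $\C(X)$, and integrality is equivalent to $I_\alpha \co \wt X \to \pr^1$ being a finite morphism carrying every ideal point to $\infty$. Conversely, if $I_\alpha$ is bounded at an ideal point $\tilde x$, then since $\tilde x$ is an ideal point some trace $I_\beta \in T_\C(X)$ has a pole there; such $I_\beta$ is not integral over $\C[I_\alpha]$, forcing infinite rank.

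Next I would invoke the correspondence assigning to each ideal point $\tilde x$ an action of $\pie(N)$ on a tree and hence an essential surface $S_{\tilde x}$. A peripheral element $\gamma$ acts elliptically precisely when $I_\gamma$ is bounded at $\tilde x$, so $S_{\tilde x}$ is closed exactly when \emph{all} peripheral $I_\gamma$ are bounded at $\tilde x$, and otherwise $S_{\tilde x}$ has boundary slope the unique slope $\alpha$ with $I_\alpha$ bounded at $\tilde x$. Combining this with the Claim yields both parts. If some $S_{\tilde x}$ is closed then every $I_\alpha$ is bounded at $\tilde x$, so $\rk_X^\C \equiv \infty$; conversely, because $\wt X \setminus X$ is finite and each non-closed ideal point contributes exactly one bounded slope, the infinitely many slopes can all have infinite rank only if some ideal point detects a closed surface. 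This is part (1). In the remaining case every ideal point detects a boundary slope, and the Claim gives $\rk_X^\C(\alpha) = \infty \Leftrightarrow I_\alpha$ bounded at some ideal point $\Leftrightarrow \alpha$ detected, which is part (2).

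Finally, to pass from $\C$ to $R \in \{\Z, \Q\}$ I would show the infinite-rank locus is $R$-independent. The Claim's condition---$I_\alpha$ bounded at $\tilde x$ while some trace $I_\beta$ has a pole there---refers only to the complex curve $X$ and its trace functions, so whenever it holds $I_\beta \in T_R(X)$ is already non-integral over $R[I_\alpha]$ and $\rk_X^R(\alpha) = \infty$ for every $R$. Conversely, when $I_\alpha$ is proper the trace functions are integral over $\Z[I_\alpha]$, since the defining relations (Cayley--Hamilton and $I_{\gamma\delta} + I_{\gamma\delta^{-1}} = I_\gamma I_\delta$) are monic with integer coefficients; hence $T_\Z(X)$, and a fortiori $T_\Q(X)$, is finite over its base ring. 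Thus $\rk_X^R(\alpha) = \infty$ holds for one $R \in \{\Z, \Q, \C\}$ iff it holds for all, upgrading parts (1) and (2) to every $R$; only the finite \emph{values} feel the field of definition, which is exactly the refinement the rest of the paper quantifies. I expect the main obstacle to be the geometric step of the third paragraph---verifying that a non-closed surface detects exactly one slope, so that the counting argument for part (1) is valid, and correctly handling degenerate ideal points at which some $I_\alpha$ is globally constant---together with making the integrality criterion of the Claim precise for the subring $T_R(X)$ rather than the full coordinate ring.
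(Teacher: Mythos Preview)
The paper does not contain a proof of this statement. Theorem~\ref{thm:cheese} is quoted from Chesebro \cite[Theorem~1.2]{C} as background and motivation; the paper's own contributions are Theorem~\ref{thm:main} and Corollary~\ref{cor:ringoint}, which compute the \emph{finite} values of $\rk_X^R$. There is therefore nothing in the paper to compare your argument against.

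For what it is worth, your outline is a plausible reconstruction of how such a proof should go: the equivalence between finite $\C[I_\alpha]$-rank and $I_\alpha$ having a pole at every ideal point, followed by the Culler--Shalen dictionary between bounded peripheral traces and boundary slopes, is the expected shape. Two small comments. First, in this paper $T_\C(X)=\C[X]$ by definition, so the distinction you draw between them is unnecessary. Second, your passage to $R\in\{\Z,\Q\}$ via integrality of trace functions over $\Z[I_\alpha]$ is essentially what is invoked here in the proof of Proposition~\ref{prop:eqrk} via the remark following \cite[Corollary~2.5]{C}. The issues you flag at the end---handling slopes with constant $I_\alpha$ and verifying that each ideal point contributes at most one bounded slope---are genuine, but they are resolved in \cite{CGLS} and \cite{C} rather than in the present paper.
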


We address two specific questions concerning Theorem \ref{thm:cheese}.  First, for a fixed slope $\alpha \in \mathcal{S}$, note that 
\[ \rk_X^\C ( \alpha) \leq \rk_X^\Q ( \alpha ) \leq \rk_X^\Z ( \alpha ).\]

\begin{qst}
Aside from these inequalities, is there a natural relationship between the values of the rank functions? 
\label{qst1}
\end{qst}

In the final section of \cite{C}, Chesebro calculated the value of $\rk_X^R$ for a collection of 3-manifolds and a handful of slopes.  His techniques were computationally demanding and appear difficult to generalize. Thus:
 
 \begin{qst}
 Given $N$ and $X$, we ask if there is a straightforward way to compute $\rk_X^R ( \alpha)$ for each slope $\alpha \in \mathcal{S}$. 
\label{qst:2}
\end{qst}

The main result of this paper answers both of these questions by relating the values of the rank functions to the Culler--Shalen norm and the field of definition of $X$. The entire character variety is defined over $\Q$, so the minimal field of definition of the irreducible components of $X(N)$ are number fields.  We prove the following:

\begin{thm}  Let $X$ be an irreducible component of $X(N)$ that does not detect a closed essential surface whose minimal field of definition is the number field $k_0$.  Let $\norm{-} \co H_1 ( \bound N, \R) \to \R$ denote the Culler--Shalen norm associated to $X$. Suppose $\alpha \in \mathcal{S}$ is not a boundary slope of $N$. 

\begin{enumerate}
\item If $k$ is a subfield of $\C$ containing $k_0$, then 
\[ \rk_X^k (\alpha )  =  \norm{\alpha}, \quad \text{and} \]
\item If $k$ is a subfield of $k_0$, then 
\[ \rk_X^k ( \alpha ) = [k_0 : k] \cdot \norm{\alpha}. \]
\end{enumerate}

\label{thm:main}  
\end{thm}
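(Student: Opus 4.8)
The plan is to identify each rank with a degree of field extension and then use the field of definition to count how many geometric curves appear after extending scalars to $\C$. Throughout I use the classical fact that trace functions generate the coordinate ring, so that $T_\C(X) = \C[X]$, together with Theorem \ref{thm:cheese}: since $X$ detects no closed essential surface and $\alpha$ is not a boundary slope, every rank in sight is finite and $I_\alpha$ is nonconstant on $X$. Writing $F_k = \mathrm{Frac}\,T_k(X)$, I would first observe that $T_k(X)$ is a finitely generated domain of Krull dimension one over $k$ and that $k[I_\alpha] \cong k[t]$ is a polynomial ring; localizing $T_k(X)$ at the nonzero elements of $k[I_\alpha]$ then yields a field, so
\[ \rk_X^k(\alpha) = \dim_{k(I_\alpha)} F_k = [F_k : k(I_\alpha)] < \infty. \]

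Next I would pass to $\C$. Because $I_\alpha$ is a single transcendental over $k$, the ring $k(I_\alpha)\otimes_k \C = \C(I_\alpha)$ is a field, whence $F_k \otimes_{k(I_\alpha)} \C(I_\alpha) = F_k \otimes_k \C$ and therefore $[F_k : k(I_\alpha)] = \dim_{\C(I_\alpha)}\!\big(F_k \otimes_k \C\big)$. The problem thus reduces to understanding the $\C$-algebra $F_k \otimes_k \C$, equivalently the geometry of $Y_k := \mathrm{Spec}\,T_k(X)$ after base change. The inclusion $T_k(X) \subseteq \C[X]$ and the identity $T_\C(X) = \C[X]$ give a surjection $T_k(X)\otimes_k \C \twoheadrightarrow \C[X]$, exhibiting $X$ as one irreducible component of $Y_k \otimes_k \C$; in characteristic zero this base change is reduced, and $\mathrm{Gal}(\C/k)$ permutes its components, each of which is a Galois conjugate $X^\sigma$ of $X$. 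The number of components equals $[\ell : k]$, where $\ell$ is the algebraic closure of $k$ inside $F_k$.

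The crux — and the step I expect to be the main obstacle — is the identification $\ell = k_0\cdot k$, relating the module-theoretic trace field to the geometric minimal field of definition. I would prove this by first establishing the standard characterization that $k_0$ is the field of constants of $F_\Q$, i.e. the algebraic closure of $\Q$ in the trace field of the $\Q$-irreducible component containing $X$ (this is exactly the field over which the conjugate geometric components become individually defined, and $X$ realizes one of them). Since $\Q \subseteq k$ gives $k_0 \subseteq F_\Q \subseteq F_k$, the closure $\ell$ contains $k_0 k$; conversely, geometric irreducibility of $Y_{k_0}$ over $k_0$ means $k_0$ is algebraically closed in $F_{k_0} = k_0\cdot F_k$, which forces $\ell \subseteq k_0$ when $k \subseteq k_0$ (and $\ell = k$ when $k \supseteq k_0$). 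Verifying the field-of-constants characterization in this concrete setting, and confirming that the components of $Y_k\otimes_k\C$ are precisely the full Galois orbit of $X$ rather than a proper subset, is where the care is needed.

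Finally I would assemble the count. The decomposition gives $F_k \otimes_k \C \cong \prod_{\sigma} \C(X^\sigma)$, the product over the $[\ell:k]$ conjugate components, with $I_\alpha$ mapping diagonally to the trace function on each factor, so that
\[ \rk_X^k(\alpha) = \dim_{\C(I_\alpha)} \prod_{\sigma} \C(X^\sigma) = \sum_{\sigma} \big[\C(X^\sigma) : \C(I_\alpha)\big]. \]
Each summand is $\deg\big(\wt{I}_\alpha \co \wt{X^\sigma} \to \pr^1\big)$, which by definition is the Culler--Shalen norm $\norm{\alpha}$ computed on $X^\sigma$; because Galois conjugation is an abstract isomorphism $X \cong X^\sigma$ carrying $I_\alpha$ to $I_\alpha$, every summand equals $\norm{\alpha}$. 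Hence $\rk_X^k(\alpha) = [\ell:k]\cdot\norm{\alpha}$, which specializes to $\norm{\alpha}$ when $k \supseteq k_0$, proving part (1), and to $[k_0:k]\cdot\norm{\alpha}$ when $k \subseteq k_0$, proving part (2).
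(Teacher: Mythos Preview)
Your overall strategy is the same as the paper's: both arguments reduce, for $k \subseteq k_0$, to showing that the $k$-form $T_k(X)$ base-changes to the coordinate ring of the union $\mathfrak{X} = \bigcup_\sigma X^\sigma$ of the $[k_0:k]$ conjugates of $X$, and then count rank over $\C[I_\alpha]$ componentwise. Your ``crux'' --- identifying the algebraic closure $\ell$ of $k$ in $F_k$ with $k_0\cdot k$ --- is exactly the content of the paper's Lemma~\ref{lem:x1eqx}, which proves the concrete statement $I_k(X) = I_k(\mathfrak{X})$, so that $T_k(X)\otimes_k\C \cong \C[\mathfrak{X}]$.

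However, your passage to $\C$ contains a genuine error. The claim that $k(I_\alpha)\otimes_k\C = \C(I_\alpha)$ is a field is false whenever $\C$ contains transcendentals over $k$ (in particular for every number field $k$): if $c\in\C$ is transcendental over $k$, then $I_\alpha - c$ divides no nonzero element of $k[I_\alpha]$ and hence is not a unit in $k(I_\alpha)\otimes_k\C$. Consequently your identity $F_k\otimes_{k(I_\alpha)}\C(I_\alpha) = F_k\otimes_k\C$ fails, and the asserted decomposition $F_k\otimes_k\C \cong \prod_\sigma \C(X^\sigma)$ is not correct as written.

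The fix is to stay at the coordinate-ring level, where the analogous identity $k[I_\alpha]\otimes_k\C = \C[I_\alpha]$ \emph{is} valid. Then
\[
T_k(X)\otimes_{k[I_\alpha]}\C[I_\alpha]\;=\;T_k(X)\otimes_k\C\;=\;\C[\mathfrak{X}],
\]
so the $k[I_\alpha]$-rank of $T_k(X)$ equals the $\C[I_\alpha]$-rank of $\C[\mathfrak{X}]$; localizing at $\C[I_\alpha]\setminus\{0\}$ now does yield $\prod_i \C(X_i)$, giving $\sum_i [\C(X_i):\C(I_\alpha)] = [k_0:k]\cdot\norm{\alpha}$. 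This is precisely the route the paper takes (Proposition~\ref{prop:fgfree} and Lemma~\ref{lem:conrk}); the paper also supplies an alternative Galois-descent argument (Proposition~\ref{prop:eqrk}) showing directly that an integral free basis for $\C[\mathfrak{X}]$ over $\C[I_\alpha]$ already spans $T_k(\mathfrak{X})$ over $k[I_\alpha]$, avoiding tensor products altogether.
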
 

Hence the $\Q$-rank function is just a integer multiple of the $\C$-rank function.  We also relate $\rk_X^R$ to the Culler-Shalen norm when $R$ is the ring of integers of a number field. Note that under these hypotheses the rank of a finitely generated module and its minimal number of generators may be different. 

\begin{cor}
Suppose $k$ is a number field either containing or contained in $k_0$.  If $\mathcal{O}_k$ is the ring of integers of $k$ and $\alpha$ is not a boundary slope of $N$, then  
\[ \rk_X^{\mathcal{O}_k} ( \alpha ) = \rk_X^k ( \alpha ) = \begin{cases} \norm{\alpha} & \text{if } k_0 \subseteq k  \\ [k_0:k] \cdot \norm{ \alpha } & \text{if } k \subseteq k_0\end{cases} \] 
and, in particular, $\rk_X^\Z ( \alpha ) = \rk_X^\Q ( \alpha )$. 
\label{cor:ringoint}
\end{cor}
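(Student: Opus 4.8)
The plan is to reduce everything to the equality $\rk_X^{\mathcal{O}_k}(\alpha) = \rk_X^k(\alpha)$, since the second displayed equality in the corollary is then immediate from Theorem \ref{thm:main} (case $k_0 \subseteq k$ gives $\norm{\alpha}$, case $k \subseteq k_0$ gives $[k_0:k]\cdot\norm{\alpha}$), and the ``in particular'' assertion is the special case $k = \Q$, $\mathcal{O}_k = \Z$ (every number field contains $\Q$, so this falls under $k \subseteq k_0$). Thus the whole content is to show that replacing the coefficient ring $\mathcal{O}_k$ by its fraction field $k$ does not change the rank of the trace module.

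The idea is to compute both ranks over one common field. First I would record that, because $\alpha$ is not a boundary slope, the trace function $I_\alpha$ is non-constant on $X$: otherwise $T_\C(X)$, which has positive transcendence degree over $\C$, would be infinite-dimensional over $\C[I_\alpha] = \C$, contradicting the finiteness of $\rk_X^\C(\alpha)$ furnished by Theorem \ref{thm:cheese} under the standing hypothesis that $X$ detects neither $\alpha$ nor a closed essential surface. Since $X$ is irreducible and $\C$ is algebraically closed, a non-constant element of $\C[X]$ is transcendental over $\C$, hence over the subfield $k$. Consequently both $\mathcal{O}_k[I_\alpha] \cong \mathcal{O}_k[t]$ and $k[I_\alpha] \cong k[t]$ are polynomial rings, and --- the crucial point --- they share the same fraction field $F = k(I_\alpha)$, because $k = \mathrm{Frac}(\mathcal{O}_k)$.

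Next I would exploit that $T_R(X)$ is by definition the $R$-subalgebra of $\C[X]$ generated by a fixed set of trace functions, independent of $R$. Passing from $\mathcal{O}_k$ to $k = \mathrm{Frac}(\mathcal{O}_k)$ therefore just inverts $\mathcal{O}_k \setminus \{0\}$, giving
\[ T_k(X) = T_{\mathcal{O}_k}(X) \otimes_{\mathcal{O}_k} k = T_{\mathcal{O}_k}(X) \otimes_{\mathcal{O}_k[I_\alpha]} k[I_\alpha], \]
where the last identity uses $\mathcal{O}_k[I_\alpha] \otimes_{\mathcal{O}_k} k = k[I_\alpha]$. Since the rank of a module over a domain is the dimension of its extension to the fraction field, I would tensor up to $F$ and apply associativity of the tensor product:
\[ T_k(X) \otimes_{k[I_\alpha]} F \;\cong\; T_{\mathcal{O}_k}(X) \otimes_{\mathcal{O}_k[I_\alpha]} k[I_\alpha] \otimes_{k[I_\alpha]} F \;\cong\; T_{\mathcal{O}_k}(X) \otimes_{\mathcal{O}_k[I_\alpha]} F. \]
Taking $\dim_F$ of both ends yields $\rk_X^k(\alpha) = \rk_X^{\mathcal{O}_k}(\alpha)$, and Theorem \ref{thm:main} supplies the closed-form value.

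The computation is short once the setup is in place; the step requiring the most care is the identification $T_k(X) = T_{\mathcal{O}_k}(X) \otimes_{\mathcal{O}_k} k$, i.e. checking that enlarging the coefficients from $\mathcal{O}_k$ to $k$ is \emph{exactly} the localization at $\mathcal{O}_k \setminus \{0\}$ inside $\C[X]$, with no new elements appearing and no collapsing. This holds because $T_{\mathcal{O}_k}(X)$ is torsion-free as a subring of the domain $\C[X]$, so it embeds in its fraction field and the localization is faithful. Finally I would emphasize, as flagged in the remark preceding the corollary, that this says nothing about minimal numbers of generators: $\mathcal{O}_k[I_\alpha]$ is a two-dimensional polynomial ring over a Dedekind domain and is not a principal ideal domain, so a finitely generated module over it need not be free, and its minimal number of generators may exceed its rank even though the rank itself is insensitive to the passage from $\mathcal{O}_k$ to $k$.
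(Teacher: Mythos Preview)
Your proof is correct and follows the same route as the paper's: both reduce everything to the fact that $\mathrm{Frac}(\mathcal{O}_k)=k$, so that $\mathcal{O}_k[I_\alpha]$ and $k[I_\alpha]$ share the fraction field $F=k(I_\alpha)$ and linear (in)dependence over the two rings coincides. The paper does this by hand---clearing denominators in a $k[I_\alpha]$-relation to get an $\mathcal{O}_k[I_\alpha]$-relation---while you package the identical step as a tensor-product identity and read off both ranks as $\dim_F$ of a common $F$-vector space.

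One small point to tighten: the paper's definition of $\rk_X^R(\alpha)$ declares the value $\infty$ whenever $T_R(X)$ is not finitely generated over $R[I_\alpha]$, so your identification of rank with $\dim_F(-\otimes F)$ presupposes that $T_{\mathcal{O}_k}(X)$ is finitely generated over $\mathcal{O}_k[I_\alpha]$. The paper records this immediately before the corollary; it follows from the $R=\Z$ case of Theorem~\ref{thm:cheese} by extending scalars from $\Z$ to $\mathcal{O}_k$.
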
   Theorem \ref{thm:main} answers Question \ref{qst:2} for many specific knot manifolds.  Boyer and Zhang proved that the $A$-polynomial of \cite{CCGLS} determines the Culler--Shalen norm for each slope on $\bound N$.  The $A$-polynomial is known for many manifolds, including a few infinite families (cf. \cite{HS}), and is, in principle, computable for any specific knot manifold.  Thus, if one knows the $A$-polynomial of $N$ and the minimal field of definition of $X$, one can compute $\rk^k_X ( \alpha)$ for many subfields $k$ of $\C$ and each $\alpha \in \mathcal{S}$. 

The proof of Theorem \ref{thm:main} follows from entirely algebro-geometric arguments, so we divide the remainder of the paper into two sections.  The first  will be review the basics of Culler-Shalen theory and in the second section we will be devoted prove  Theorem \ref{thm:main} and Corollary \ref{cor:ringoint}.
\subsection*{Acknowledgments} 

The author thanks his thesis advisor, Eric Chesebro for his guidance, patience, and support.  The author would also like to thank Steve Boyer for many helpful conversations and an invitation to the Universit\'e de Qu\'ebec \`a Montr\'eal during the winter of 2016. Thanks are also extended to 
Alex Casella, Timothy Morris, Jessica Purcell, and Stephan Tillmann.

\section{Character varieties and the Culler--Shalen norm}
\label{sec:charvarstuff}

In this section we will review the basics of Culler--Shalen theory  beginning with the construction of the $\SL_2 ( \C )$-character varieties.  For a more detailed description, see \cite{CS1}, \cite{CGLS}, or \cite{S}.  We restrict our attention to a specific class of 3-manifolds.   

\begin{dfn}
A \textit{knot manifold} is a compact,  irreducible, orientable 3-manifold whose boundary consists of a single irreducible torus.  
\end{dfn}

Let $N$ be a knot manifold and set $\Gamma = \pie ( N)$.  The collection $R(N) := \mathrm{Hom}( \Gamma, \SL_2 ( \C))$ naturally admits the structure of an complex affine algebraic set and is called the \textit{representation variety of $N$}. A choice of presentation for $\Gamma$ determines an embedding $R(N) \hookrightarrow \C^n$ for some $n \in \N$.  Each of these embeddings can be cut out by polynomials with coefficients in $\Z$ and there is a natural isomorphism between any two such embeddings. 

Now fix a presentation for $\Gamma$ and an embedding $R(N) \hookrightarrow \C^n$.  Each $\gamma \in \Gamma$ determines a regular \textit{trace function} $I_\gamma \co R(N) \to \C$ by setting $I_\gamma ( \rho ) =  \tr \rho ( \gamma)$.  Culler and Shalen \cite{CS1} proved  the unital ring generated by the  trace functions is  generated by a finite set $\{ I_{\gamma_1}, \ldots, I_{\gamma_m} \}$ for some $\gamma_i \in \Gamma$.  Define a map $t \co R(N) \to \C^m$ by $t( \rho ) = ( I_{\gamma_1} ( \rho) , \ldots, I_{\gamma_m} ( \rho ))$.  The image of $t$ is a closed algebraic set whose points are naturally in 1-1 correspondence with the $\SL_2( \C)$-characters of $\Gamma$ \cite{CS1}.  Thus we call $ t(R(N))$ the \textit{character variety of $N$} and denote it by $X(N)$.  Note that each regular trace function $I_\gamma$ pushes forward to a regular function on $X(N)$. Henceforth we will use $I_\gamma$ to denote the trace function on $X(N)$.  As with $R(N)$, if one computes the character variety of $N$ with respect to another presentation for $\Gamma$, there is a natural isomorphism between the two embedded algebraic sets. 

\begin{dfns}
Let $X\subset \C^m$ be an algebraic set.  
\begin{enumerate}
\item If the ideal $I(X)$ of $X$ can be generated by polynomials with coefficients in the field $k$, we say \textit{$X$ is defined over $k$}. 
\item There is a unique field $k_0$ called the \textit{field of definition of $X$} such that $X$ is defined over $k_0$ and whenever $X$ is defined over another field $k$, we have $k_0 \subseteq k$ \cite[Ch.\ III Thm.\ 10]{L}.
\end{enumerate}
\end{dfns}

In \cite{GMA}, the authors exhibit explicit generators for the ideal of $X(N)$ and these polynomials have integral coefficients, so the field of definition of $X(N)$ is $\Q$.  Hence if $X \subseteq X(N)$ is an irreducible component, then the field of definition of $X$ must be a number field.   The component functions of the natural isomorphism between any two embeddings of $X(N)$ are given by polynomials with coefficients in $\Z$, so this number field is an invariant of $N$.  We summarize this with the following proposition due to Long and Reid. 

\begin{prop}{\cite[Prop 3.1]{LR}}
If $N$ is a knot manifold and $X\subseteq X(N)$ is an irreducible component defined over a number field $k_0$.   Then $k_0$ is independent of the choice of generating set used to compute $X(N)$. 
\label{prop:fod}
\end{prop}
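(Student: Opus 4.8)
The plan is to leverage the fact, established just above, that any two embeddings of $X(N)$ arising from two generating sets are related by a natural isomorphism $\Phi$ whose component functions, together with those of its inverse $\Psi$, are polynomials with coefficients in $\Z$. I would fix such a pair of embeddings $X(N) \hookrightarrow \C^m$ and $X(N)' \hookrightarrow \C^{m'}$, let $X$ be the component of interest in the first, and set $X' = \Phi(X)$. Since $\Phi$ is an isomorphism of algebraic sets, it carries irreducible components to irreducible components, so $X'$ is exactly the component of $X(N)'$ corresponding to $X$, and it suffices to prove that $X$ and $X'$ have the same minimal field of definition. In turn it is enough to show that for every subfield $k \subseteq \C$ one has $X$ defined over $k$ if and only if $X'$ is defined over $k$: equality of the two collections of fields of definition forces equality of their unique minimal elements.

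The main tool I would use is the standard characterization of the field of definition in characteristic zero: a subvariety $V \subseteq \C^n$ is defined over a subfield $k$ if and only if it is setwise invariant under the coordinatewise action of $\aut{\C/k}$, and consequently the minimal field of definition of $V$ is the fixed field of its stabilizer $\mathrm{Stab}(V) \le \aut{\C/\Q}$. This equivalence is the one step I would invoke rather than reprove. Both $X(N)$ and $X(N)'$ have field of definition $\Q$, so each is invariant under the whole group $\aut{\C/\Q}$; in particular this action restricts to a permutation of the finitely many irreducible components of each.

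The heart of the argument is that $\Phi$ intertwines the two Galois actions. Because each component function of $\Phi$ lies in $\Q[x_1,\dots,x_m]$, every $\sigma \in \aut{\C/\Q}$ fixes its coefficients, so $\sigma(\Phi(p)) = \Phi(\sigma(p))$ for all $p \in X(N)$, and likewise for $\Psi$. Fixing a subfield $k \supseteq \Q$ and $\sigma \in \aut{\C/k}$, I would then compute $\sigma(X') = \sigma(\Phi(X)) = \Phi(\sigma(X))$; since $\Phi$ is a bijection from $X(N)$ onto $X(N)'$ and both $X$ and $\sigma(X)$ lie in the $\sigma$-invariant set $X(N)$, this gives $\sigma(X') = X'$ exactly when $\sigma(X) = X$. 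Hence $X$ is $\aut{\C/k}$-invariant if and only if $X'$ is, so $X$ and $X'$ are defined over precisely the same subfields of $\C$. Letting $k$ range over all such subfields yields $\mathrm{Stab}(X) = \mathrm{Stab}(X')$, so their fixed fields — the minimal fields of definition of $X$ and $X'$ — coincide, which is the assertion.

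The step I expect to be the main obstacle is justifying the Galois characterization in the precise ideal-theoretic form used here, namely that $I(V)$ is generated by polynomials over $k$ if and only if $\sigma(V) = V$ for all $\sigma \in \aut{\C/k}$, together with the auxiliary fact that the fixed field of $\aut{\C/k}$ is $k$ itself; both are standard in characteristic zero but rely on the algebraic closedness and infinite transcendence degree of $\C$. If one prefers to sidestep Galois theory, the same conclusion follows directly from the observation that $\Phi$ is an isomorphism defined over $\Q$, hence over every field $k \supseteq \Q$: it induces a $k$-algebra isomorphism of coordinate rings carrying the ideal of $X$ onto that of $X'$, and a $k$-rational isomorphism matches ideals generated over $k$ with ideals generated over $k$, again giving equality of the minimal fields of definition.
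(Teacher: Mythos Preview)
The paper does not actually prove this proposition; it is stated with attribution to Long and Reid \cite{LR}, and the only justification given is the sentence immediately preceding it: ``The component functions of the natural isomorphism between any two embeddings of $X(N)$ are given by polynomials with coefficients in $\Z$, so this number field is an invariant of $N$.'' Your proposal is a correct and careful expansion of exactly this idea---you take the $\Z$-defined isomorphism $\Phi$ between the two embeddings, observe that it intertwines the $\aut{\C/\Q}$-actions, and deduce that the stabilizers of $X$ and $X'$ coincide, hence so do their minimal fields of definition.

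Your argument is sound. The Galois-theoretic characterization you invoke (that $V$ is defined over $k$ if and only if it is $\aut{\C/k}$-invariant, with the minimal field of definition equal to the fixed field of the stabilizer) is precisely the content of \cite[Ch.\ III, Thms.\ 7 and 10]{L}, which the paper itself cites later in Section~\ref{sec:charvarstuff} and again in Section~\ref{sec:proofmain}. So the ``main obstacle'' you flag is already handled by references the paper relies on elsewhere. The alternative route you sketch at the end---observing directly that a $\Q$-isomorphism of coordinate rings carries $k$-generated ideals to $k$-generated ideals---is also valid and arguably closer in spirit to the paper's one-line remark.
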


In \cite{LR}, Long and Reid studied the question of which number fields can arise as the field of definition of an irreducible component of a knot manifold's character variety.  They also develop methods to construct knot manifolds whose character varieties contain irreducible components whose fields of definition have arbitrarily large degree over $\Q$.

Suppose $X \subseteq X(N)$ is an irreducible component with field of definition $k_0$.  Give a subfield $k$ of $\C$, the group $\aut{ \C / k}$ acts on $\C^n$  by applying $\sigma \in \aut{\C / k}$ to each coordinate of a point; for $P \in \C^n$, let $P^\sigma$ denote $\sigma \cdot P$.  Similarly, $\aut{ \C / k}$ acts on $R = \C[z_1, \ldots, z_n]$ by applying $\sigma \in \aut{ \C / k}$ to the coefficients of a polynomial; for $p \in R$, let $p^\sigma$ denote $\sigma \cdot p$.   

If $k$ is  subfield of $\C$ containing $k_0$, then the actions of $\aut{\C / k}$ on $\C^n$ and $R$ descend to an actions on $X$ and the coordinate ring $\C[X]$ of $X$ respectively.  On the other hand, if $k_0$ is an extension of $k$ an automorphism $\sigma \in \aut{\C / k}$ may carry $X$ to another irreducible algebraic set $X^\sigma = \{ P^\sigma \mid P \in X \}$.  The ideal of $X^\sigma$ is simply $I(X^\sigma) = \{ f^\sigma  \mid f \in I(X) \}$.    With this in mind, we say that an algebraic set $Y \subseteq \C^n$ is \textit{conjugate to $X$ over $k$} if there is some automorphism $\sigma \in \aut{\C / k}$ such that $Y = X^\sigma$.  The irreducible algebraic sets in the collection $\{ X^\sigma \mid \sigma \in \aut{\C / k}$ are the \textit{conjugate varieties to $X$ over $k$}.   If $k_0$ is a finite extension of $k$, then there are precisely $[k_0 : k ]$ varieties in $\C^n$ conjugate to $X$ over $k$ \cite[Ch. III, Thm. 10]{L}. Note that this quantity is finite since $k_0$ is a number field.

Take another irreducible algebraic set $Y \subseteq \C^n$ and a regular map $f \co X \to Y$. We call $f$ \textit{dominant} if the Zariski-clusre of $f(X)$ is equal to $Y$.  A dominant regular map $f$ induces a field monomorphism $f^\ast \co \C(Y) \to \C(X)$ by precomposing a rational function on $Y$ with $f$.  The \textit{degree of $f$} is the degree of the field extension $ [ \C(X) : f^\ast ( \C ( Y))]$.  The induced homomorphism $f^\ast$ restricts to an embedding $f^\ast \co \C[Y] \to \C[X]$, so $\C[X]$ is naturally an $f^\ast ( \C[ Y])$-module.  We say a dominant regular map $f \co X \to Y$ is \textit{finite} if $\C[X]$ is finitely generated as an $f^\ast( \C[Y])$-module.  Note that, equivalently, $f$ is finite if and only if $\C[X]$ is an integral ring extension of $f^\ast ( \C[Y])$ since $\C[X]$ is a finitely generated $\C$-algebra (cf. \cite{AM}). 

\begin{remark}
Finite degree dominant regular maps are not necessarily finite. For instance, if $X \subseteq \C^2$ is the curve cut out by $xy-1$, then projection onto the first coordinate has degree one since $\C(X) \cong \C(x)$.  However, $y \in \C[X]$ is not integral over $\C[x]$, so the projection is not finite. 
\end{remark}

In their seminal work \cite{CS1}, Culler and Shalen studied the behavior of trace functions at the points at infinity, or \textit{ideal points} of irreducible curves in $X(N)$.  Using this behavior, they combined Tits-Bass-Serre theory (cf. \cite{JPS}) and a construction due to Stallings to build essential surfaces in $N$.  Moreover, they were able to determine some topological information about these essential surfaces by studying the trace functions associated to peripheral elements of $\pie ( N)$.  To describe this information, we recall the definition of a slope and a boundary slope. 

\begin{dfns}
\[\] \vspace{-.42in}
\begin{enumerate}
\item A \textit{slope} is the unoriented isotopy class of a simply closed on $\bound N$.  Let $\mathcal{S}$ denote the set of slopes of $N$. 

\item A slope $\alpha$ is a \textit{boundary slope} is there is an essential surface $\Sigma$ in $N$ such that the components of $\Sigma \cap \bound N$ represent $\alpha$. 

\end{enumerate}
\end{dfns}

Note that if $\alpha$ is a slope and $\gamma, \gamma' \in \pie ( N)$ are two elements representing $\alpha$, then $I_\gamma = I_{\gamma'}$ since trace is invariant under conjugation and inversion.  Hence there is a well-defined trace function $I_\alpha$ for each slope $\alpha \in \mathcal{S}$.

The behavior of the peripheral trace functions at an ideal point of $X$ dictates the boundary slope, or lack thereof, of the essential surfaces associated to this point. Cheserbro \cite{C} noticed a connection between the detection of essential surfaces by ideal points and module structures on the coordinate ring of $X$.   We rephrase one Theorem \ref{thm:cheese} in the case $R = \C$ with the following proposition. 

\begin{prop}{\cite{C}}
Let $X$ be an irreducible component of $X(N)$.  Then either 
\begin{enumerate}
\item $X$ detects a closed essential surface in $N$ and the trace function $I_\alpha$ is not finite for each slope $\alpha \in \mathcal{S}$.  
\item $X$ does not detect a closed essential surface.  In this case, the trace function $I_\alpha$ is not finite if and only if the boundary slope $\alpha$ is detected by an ideal point of $X$. 
\end{enumerate}
\label{prop:cheese1}
\end{prop}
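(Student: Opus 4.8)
The plan is to reduce the proposition to two ingredients and then combine them: an algebraic criterion deciding finiteness of a trace function from its behaviour at the ideal points of $X$, and the Culler--Shalen dictionary between ideal points and the essential surfaces they produce. Throughout I take an \emph{ideal point} of $X$ to be a nontrivial discrete valuation $v$ of $\C(X)$ that is trivial on $\C$ and at which some element of $\C[X]$ has a pole; for a curve these are precisely the points of the smooth projective completion of $X$ lying outside $X$. I will say a regular function $g$ is \emph{bounded} at $v$ if $v(g)\ge 0$ and \emph{blows up} at $v$ if $v(g)<0$.

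First I will establish the finiteness criterion: the trace function $I_\alpha\co X\to\af^1$ fails to be finite precisely when $I_\alpha$ is bounded at some ideal point of $X$. For the forward implication I will use that the integral closure of $\C[I_\alpha]$ in $\C(X)$ is the intersection of the valuation rings of $\C(X)$ containing it (cf.\ \cite{AM}): if $I_\alpha$ is not finite then some $g\in\C[X]$ is not integral over $\C[I_\alpha]$, which produces a valuation $v$ with $v(I_\alpha)\ge 0$ and $v(g)<0$; thus $I_\alpha$ is bounded at an ideal point (discrete of rank one for a curve, and arranged to be divisorial in general) where $g$ blows up. For the converse I will argue by contradiction: if $I_\alpha$ were finite yet bounded at an ideal point $v$, then taking $g\in\C[X]$ with $v(g)<0$ and an integral equation for $g$ over $\C[I_\alpha]$, the leading term $g^n$ has strictly smallest $v$-value and so cannot be cancelled. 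A constant trace function is bounded at every ideal point, so it too is not finite, consistent with a non-dominant map failing to be finite.

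The second ingredient is the Culler--Shalen construction \cite{CS1, CGLS, S}: each ideal point $v$ yields a nonempty essential surface $\Sigma_v\subseteq N$, which is closed exactly when every peripheral trace function is bounded at $v$, and which otherwise has boundary slope equal to the \emph{unique} slope $\alpha_v$ with $I_{\alpha_v}$ bounded at $v$---every other peripheral trace blowing up at $v$ with pole order a positive multiple of the intersection number $\Delta(\alpha_v,\,\cdot\,)$. Granting these, the proposition follows quickly. For (1), a closed detected surface comes from an ideal point $v_0$ at which all peripheral traces are bounded, so $I_\alpha$ is bounded at $v_0$ for every slope $\alpha$ and the criterion gives that no $I_\alpha$ is finite. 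For (2), assume $X$ detects no closed surface, so every ideal point has nonempty boundary and hence detects its slope $\alpha_v$. If $\alpha$ is detected, say $\alpha=\alpha_{v_0}$, then $I_\alpha$ is bounded at $v_0$ and is not finite; conversely, if $I_\alpha$ is not finite the criterion supplies an ideal point $v$ with $I_\alpha$ bounded, whence $\alpha=\alpha_v$ by uniqueness and $\alpha$ is detected.

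The main obstacle will be reconciling the two meanings of ``ideal point'': the valuation $v$ extracted from the finiteness criterion must be one to which the Culler--Shalen machinery applies and for which the boundary-slope dictionary holds. When $X$ is a curve this is automatic, since $v$ is literally a point of the smooth projective model and the identity relating pole orders to intersection numbers is classical. When $\dim X\ge 2$, however, I will need to arrange that the valuation furnished above is divisorial and gives a nontrivial, nondegenerate action of $\pie(N)$ on a tree, so that Stallings' construction returns an essential surface with the stated boundary behaviour. Establishing that the detected slope is exactly the unique peripheral slope bounded by $v$---equivalently, that the local seminorm $\beta\mapsto\max\{0,-v(I_\beta)\}$ is a positive multiple of $\Delta(\alpha_v,\beta)$---is the technical heart of the matter and is precisely where the Culler--Shalen analysis of peripheral traces at ideal points is needed in full.
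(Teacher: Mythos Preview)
The paper does not give its own proof of this proposition: it is stated with a citation to \cite{C} and is explicitly introduced as a rephrasing of Chesebro's Theorem~1.2 in the case $R=\C$. There is therefore nothing in the present paper to compare your argument against.

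That said, your sketch is the standard route to this result and is essentially what underlies Chesebro's argument. The valuation-theoretic finiteness criterion you isolate---$I_\alpha$ is finite if and only if it has a pole at every ideal point---is correct, and your proof of it via the description of integral closure as an intersection of valuation rings is clean. The Culler--Shalen dictionary you invoke (closed surface $\Leftrightarrow$ all peripheral traces bounded; otherwise a unique bounded peripheral slope, detected as a boundary slope) is exactly the classical one from \cite{CS1,CGLS}. Your combination of the two pieces is correct in both cases of the dichotomy.

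Your caveat about higher-dimensional $X$ is well placed but not a genuine obstacle here. As the paper itself notes just after restating Theorem~\ref{thm:main}, any component of dimension at least $2$ automatically detects a closed essential surface, so case~(2) only concerns curves, where your ideal-point language is unambiguous. For case~(1) one needs only a single ideal valuation at which every peripheral trace is bounded, and that is precisely what ``$X$ detects a closed essential surface'' furnishes; the divisorial refinement you worry about is not needed to conclude that each $I_\alpha$ fails to be finite.
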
 

The boundary slopes of $N$ that arise from the second part of Proposition \ref{prop:cheese1} are called \textit{strongly detected boundary slopes}.  $N$ has only finitely many boundary slopes \cite{HA} and $X$ has only finitely many ideal points, so it is natural to wonder which boundary slopes are strongly detected by $X$.  While not all boundary slopes are strongly detected by the character variety \cite{CT},  the Culler--Shalen norm of \cite{CGLS} and the $A$-polynomial of \cite{CCGLS} are tools that answer this question precisely.  We review the Culler--Shalen norm here. 

\begin{dfn}
Let $X \subseteq X(N)$ be an irreducible curve.  If the trace function $I_\gamma$ is non-constant on $X$ for each $\gamma \in \pie ( \bound N)$, then $X$ is a \textit{norm curve}. 
\end{dfn}

Norm curves arise often in practice.  For instance, if the interior of $N$ admits a complete hyperbolic metric and $X$ is a component of $X(N)$ containing the character of a discrete and faithful representation, then work of Thurston implies that $X$ is a norm curve. 

\begin{thm}{\cite[Section 1.4]{CGLS}}
Suppose $X \subseteq X(N)$ is an irreducible norm curve. There is a norm $\norm{ - } \co H_1 ( \bound N , \R ) \to \R$ such that 
\begin{enumerate}
\item $\norm{ \gamma } = \deg I_\gamma $ for each $\gamma \in H_1 ( \bound N , \Z)$. 
\item The unit ball of $\norm{ - } $ is a finite-sided polygon whose vertices are rational multiples of the boundary slopes strongly detected by $X$. 
\end{enumerate}
\label{thm:csnorm}
\end{thm}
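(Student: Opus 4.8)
The plan is to define the candidate norm via degrees of trace functions on the smooth projective model of $X$, reduce the norm axioms to a local computation at the ideal points, and identify each local contribution with a seminorm coming from the Bass--Serre tree there. First I would pass to the smooth projective model $\tilde{X}$ of $X$, so that each $I_\gamma$ extends to a morphism $\tilde I_\gamma \co \tilde X \to \pr^1$, and for $\gamma$ in the peripheral lattice $L := H_1(\bound N, \Z) \cong \pie(\bound N)$ set $\norm{\gamma} := \deg \tilde I_\gamma$. Since $X$ is a norm curve, $I_\gamma$ is non-constant for every nonzero peripheral $\gamma$, so $\norm\gamma > 0$, and the identity $I_\gamma = I_{\gamma^{-1}}$ makes $\norm{-}$ symmetric. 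Homogeneity $\norm{n\gamma} = |n|\,\norm\gamma$ follows from the Chebyshev relation $I_{\gamma^n} = p_n(I_\gamma)$ with $\deg p_n = n$, since degrees multiply under composition of morphisms to $\pr^1$.

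The key structural step is to localize the degree. Because $I_\gamma$ is regular on the affine curve $X$, its only poles on $\tilde X$ lie at the finitely many ideal points, so $\norm\gamma = \sum_x \Pi_x(\gamma)$, where $\Pi_x(\gamma)$ is the pole order of $\tilde I_\gamma$ at the ideal point $x$. I would then prove the main lemma: for each ideal point $x$ there is a slope $\alpha_x$ and a constant $c_x > 0$ with $\Pi_x(\gamma) = c_x\,|\Delta(\gamma, \alpha_x)|$ for all peripheral $\gamma$, where $\Delta$ is the intersection pairing on $\bound N$. This is where the Culler--Shalen machinery enters: $x$ determines a nontrivial action of $\pie(N)$ on a Bass--Serre tree, and restricting to the peripheral $\Z^2$ either fixes a point (the closed-surface case of Proposition \ref{prop:cheese1}, in which all peripheral traces remain finite at $x$) or has an invariant axis; in the latter case the pole order of a peripheral trace is controlled by the translation length, which is proportional to the geometric intersection with the invariant slope $\alpha_x$, and the Stallings surface built at $x$ has boundary slope $\alpha_x$.

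Granting the lemma, summing gives $\norm\gamma = \sum_x c_x\,|\Delta(\gamma, \alpha_x)|$, which is manifestly the restriction to $L$ of a seminorm on $H_1(\bound N, \R) \cong \R^2$: the triangle inequality and homogeneity are automatic for this expression, and positive-definiteness is forced by the norm-curve hypothesis, since if the $\alpha_x$ all lay on one line some nonzero peripheral class would have vanishing degree. The same formula then defines a genuine norm on all of $H_1(\bound N, \R)$, establishing part (1). For part (2), a finite sum $\sum_x c_x\,|\Delta(-,\alpha_x)|$ has unit ball equal to a finite intersection of strips, hence a finite-sided convex polygon; the function is non-smooth exactly along the lines $\Delta(-,\alpha_x) = 0$, that is, the rays $\R\alpha_x$, so the vertices of the unit ball lie on these rays and are therefore rational multiples of the slopes $\alpha_x$, which the lemma identifies with the strongly detected boundary slopes.

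The main obstacle is the local lemma at the ideal points: extracting the exact pole order of $I_\gamma$ in terms of the intersection number with $\alpha_x$. This demands the full Tits--Bass--Serre analysis of the tree action and a careful comparison of the valuation-theoretic order of the trace function with the translation length of $\gamma$, together with the Stallings construction pinning down $\alpha_x$ as the boundary slope of the detected surface. Everything else is formal bookkeeping with degrees and seminorms.
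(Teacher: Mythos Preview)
The paper does not prove this theorem at all: it is quoted as background from \cite[Section 1.4]{CGLS}, and the paper's own work begins only with Theorem~\ref{thm:fodrk2} and its supporting lemmas. So there is no ``paper's proof'' to compare against; what you have produced is an outline of the original Culler--Gordon--Luecke--Shalen argument, and as such it is accurate in broad strokes. Passing to the smooth projective model, writing $\deg I_\gamma$ as the sum of pole orders over ideal points, and identifying each local contribution via the Bass--Serre tree action of the peripheral $\Z^2$ is exactly how \cite{CGLS} proceeds; your identification of the local lemma $\Pi_x(\gamma)=c_x\,|\Delta(\gamma,\alpha_x)|$ as the crux is correct, and your honest flagging of it as the point requiring the full tree-theoretic analysis is appropriate.

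One small correction in part (2): the unit ball of $\sum_x c_x\,|\Delta(\,\cdot\,,\alpha_x)|$ is \emph{not} an intersection of strips---that would give the unit ball of the max of the summands, not their sum. What is true, and what you actually use, is that this norm is piecewise linear with domains of linearity the sectors cut out by the lines $\R\alpha_x$; hence the unit sphere meets each sector in a segment, the vertices of the resulting polygon lie on the rays $\R\alpha_x$, and since each $c_x>0$ every distinct detected slope genuinely contributes a vertex. With that wording fixed, your argument for (2) is sound.
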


We remark that if $X$ is not a norm curve, we can only guarantee that $\norm{-}$ will be a semi-norm. This semi-norm played a played an important role in the resolution of the Finite Filling conjecture \cite{BZ2}, \cite{BZ1}.  
  
 \begin{dfn}
  Let $X$ be an irreducible curve in $X(N)$.  The (semi)-norm on $H_1 ( \bound N , \R$) given by Theorem \ref{thm:csnorm} is called the \textit{Culler--Shalen norm associated to $X$}.  
  \end{dfn}

In \cite{C}, Chesebro also extended Proposition \ref{prop:cheese1} in a number theoretic direction.  Let $T(X)$ denote the unital subring of $\C[X]$ generated by the coordinate functions on $X$.   For each subring $R$ of $\C$, let $T_R (X)$ be the $R$-algebra generated by $T(X)$ in $\C[X]$.  Note that $T_\Z(X) = T(X)$ and $T_\C ( X) = \C[X]$.  Each trace function $I_\alpha$ can be represented by a polynomial with integer coefficients, so $I_\alpha \in T_R ( X)$ for each subring $R$ of $\C$.  The coordinate ring of the affine line is isomorphic to $\C[t]$ and the induced homomorphism $ I_\alpha^\ast \co \C[t] \to \C[X]$  carries $t$ to $I_\alpha \in \C[X]$.  Hence the natural $\C[I_\alpha]$-module structure on $\C[X]$ descend to an $R[I_\alpha]$-module structure on $T_R(X)$. 

\begin{dfn}
Suppose $X \subseteq X(N)$ is an irreducible component and $R \subseteq \C$ is a unital subring of $\C$.  The \textit{ $R$-rank function} $\rk_X^R  \co \mathcal{S} \to \N \cup \{ \infty \}$ is given by 
 \[ 
 \rk_X^R ( \alpha ) = 
\begin{cases}  
\infty & \text{if $T_R(X)$ is not finitely generated as an $R[I_\alpha]$-module.} \\
\max |\mathcal{L}| & \text{where $\mathcal{L}$ ranges over all $R[I_\alpha]$-linearly independent} \\
&  \text{subsets of $T_R(X)$.}
\end{cases}
\]
\end{dfn}
Chesebro proved results analogous to Proposition \ref{prop:cheese1} with $R  \in \{\Z, \Q\}$.  In particular, he proved:

\begin{thm1}{\cite[Theorem 1.2]{C}}
Let $X$ be an irreducible component of $X(N)$ and $R \in \{ \Z, \Q, \C \}$. 
\begin{enumerate}
\item $X$ detects a closed essential surface if and only if $\rk_X^R$ is constant with value $\infty$. 
\item Otherwise $\rk_X^R ( \alpha ) = \infty $ if and only if $X$ detects the slope $\alpha$. 
\end{enumerate}
\label{thm:cheese}
\end{thm1}
  We will spend the bulk of the next section proving Theorem \ref{thm:main}, which establishes a formula for $\rk_X^k$ for many subfields $k$ of $\C$ that incorporates the field of definition of $X$ and the Culler--Shalen norm associated to $X$.

\section{Proof of Theorem \ref{thm:main}}
\label{sec:proofmain}

We recall the statement of Theorem \ref{thm:main} below for convenience.

\begin{thm1}
Let $X$ be an irreducible component of $X(N)$ that does not detect a closed essential surface whose minimal field of definition is the number field $k_0$.  Let $\norm{-} \co H_1 ( \bound N, \R) \to \R$ denote the Culler--Shalen (semi)-norm coming from $X$. Suppose $\alpha \in \mathcal{S}$ is not a boundary slope of $N$. 

\begin{enumerate}
\item If $k$ is a subfield of $\C$ containing $k_0$, then 
\[ \rk_X^k (\alpha )  =  \norm{\alpha}, \quad \text{and} \]
\item If $k$ is a subfield of $k_0$, then 
\[ \rk_X^k ( \alpha ) = [k_0 : k] \cdot \norm{\alpha}. \]
\end{enumerate}

\end{thm1} 

Our proof of this result involves studying the trace functions associated to slopes which are not boundary slopes of a knot manifold on a component of the character variety that does not detect closed essential surfaces.  This theorem can be justified with algebro-geometric arguments, so in what follows we will study general embedded affine varieties and functions thereon. 

 It is well known that if a component of the character variety has dimension greater than 1, then it detects a closed essential surface.  Hence, to prove Theorem \ref{thm:main}, we may assume that $X$ is an irreducible curve. By Proposition \ref{prop:fod}, the field of definition of $X$ is a number field $k_0$.   Proposition \ref{prop:cheese1}  implies the trace functions of interest are finite regular functions on $X$.  Each trace function may be represented by polynomials with integer coefficients, so fix a polynomial $g \in \Z[z_1, \ldots, z_n]$ such that $g$ is finite when viewed as a regular function on $X$.

Let $\phi \co \C[z_1, \ldots, z_n] \to \C[X]$ denote the epimorphism induced by the inclusion $X \hookrightarrow \C^n$.  Define $f = \phi(g)$.  Since $X$ is an irreducible curve and $f$ is finite, $f\co X \to \C$ must be non-constant and hence dominant.  The coordinate ring of and field of rational functions on the affine line are isomorphic to $\C[s]$ and $\C(s)$ respectively and the homomorphism induced by $f$ takes $s \mapsto f$ in $\C(X)$. Thus, the subring $\C[f]$ and subfield $\C(f)$ of $\C(X)$ are isomorphic to $\C[s]$ and $\C(s)$ respectively.    

Recall that the dimension of an irreducible algebraic set $X$ is the transcendence degree of $\C(X)$ over $\C$.  Since $\C \subset \C( f ) \subset \C( X)$ and transcendence degree is additive over field extensions, the transcendence degree of  $ \C(X)$ over $\C(f)$ must be zero as $\dim X = \dim \C = 1$.  In particular, $\C(X)$ is an algebraic extension of $\C(f)$.  The field $\C(X)$ is a finitely generated extension of $\C$ and hence of $\C(f)$ as well. Thus the degree  $d = [\C( X) : \C(f) ]$ of $f$ is finite \cite[Corollary 5.2]{AM}.    

The field  $\C(f)$ has characteristic 0, so the primitive element theorem implies that the extension $\C(X) / \C(f)$ is simple.   A few of our proofs in this section will rely on a careful choice of primitive elements, so we record a key ingredient of the proof of the primitive element theorem from \cite[Theorem 5.1]{MFT} in the theorem below. 

\begin{thm}[The primitive element theorem]
Let $E$ be a field with characteristic 0 and let $F$ be a finite extension of  $E$.  If $F$ can be generated by $a, b \in F$ over $E$, then $F = E[a + \lambda b]$ for all but finitely many $\lambda \in E$. 
\label{thm:primel}
\end{thm}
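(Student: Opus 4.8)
The statement to prove is the primitive element theorem as recorded in Theorem~\ref{thm:primel}: given a characteristic-zero field $E$ and a finite extension $F = E(a,b)$, there are only finitely many $\lambda \in E$ for which $F \neq E[a + \lambda b]$.

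\textbf{The plan.}
The plan is to set $c = a + \lambda b$ for an as-yet-undetermined $\lambda \in E$ and show that $b$ (and hence $a = c - \lambda b$) lies in $E[c]$ for all but finitely many choices of $\lambda$; this immediately gives $F = E(a,b) = E(c) = E[c]$, where the last equality holds because $c$ is algebraic over $E$. The key device is to exhibit $b$ as a root of a polynomial whose coefficients lie in $E[c]$, then argue that for generic $\lambda$ this root is forced to be simple and rational over $E[c]$, so that $b$ itself is recovered. Since $E$ has characteristic $0$ it is infinite, so ``all but finitely many $\lambda$'' is a nonvacuous, in fact generic, condition.

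\textbf{Key steps in order.}
First I would let $p(x) \in E[x]$ be the minimal polynomial of $a$ over $E$ and $q(x) \in E[x]$ the minimal polynomial of $b$ over $E$; both are separable because $\mathrm{char}\,E = 0$, so in a splitting field $q$ has distinct roots $b = b_1, b_2, \ldots, b_m$ and $p$ has distinct roots $a = a_1, \ldots, a_n$. Second, I would work inside $E[c]$ with the two polynomials $q(x)$ and $h(x) := p(c - \lambda x)$. Note $h(b) = p(c - \lambda b) = p(a) = 0$ and $q(b) = 0$, so $b$ is a common root of $q$ and $h$ in the splitting field. Third, the crucial genericity step: I would show that for all but finitely many $\lambda \in E$, the element $b$ is the \emph{only} common root of $q$ and $h$. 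A common root $b_j$ of $q$ and $h$ satisfies $q(b_j) = 0$ and $p(c - \lambda b_j) = 0$, i.e. $c - \lambda b_j = a_i$ for some $i$; recalling $c = a_1 + \lambda b_1$, this reads $a_1 + \lambda(b_1 - b_j) = a_i$. For $j \neq 1$ we have $b_1 - b_j \neq 0$, so this forces the single value $\lambda = (a_i - a_1)/(b_1 - b_j)$. There are only finitely many such forbidden values (at most $n(m-1)$ of them, one for each pair $(i,j)$ with $j \ne 1$), and for every $\lambda$ outside this finite set no $b_j$ with $j \neq 1$ is a common root. Fourth, I would conclude that for such $\lambda$ the gcd of $q(x)$ and $h(x)$, computed over the field $E[c] = E(c)$, has $b$ as its only root and that root is simple (since $q$ is separable); hence $\gcd(q,h) = x - b$ up to a scalar. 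Because the gcd of two polynomials with coefficients in the field $E(c)$ again has coefficients in $E(c)$, the constant term $-b$ lies in $E(c) = E[c]$. Therefore $b \in E[c]$, giving $a \in E[c]$ and $F = E[c] = E[a + \lambda b]$.

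\textbf{Main obstacle.}
The routine verifications (separability from characteristic $0$, additivity bookkeeping) are straightforward; the real content, and the step I expect to require the most care, is the gcd argument in the fourth step -- specifically the point that the greatest common divisor of $q$ and $h$ computed over $E(c)$ coincides with the degree-one factor $x - b$ computed over the larger splitting field. This rests on the fact that the gcd is insensitive to the base field (the Euclidean algorithm over $E(c)$ produces the same monic gcd as over any extension), combined with the genericity of $\lambda$ ensuring $b$ is the unique common root. Establishing that descent of the gcd cleanly -- so that one genuinely extracts $b$ as an element of $E(c)$ rather than merely of the splitting field -- is the crux of the proof.
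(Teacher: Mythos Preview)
Your argument is correct and is precisely the classical proof of the primitive element theorem: introduce $c = a + \lambda b$, show that for $\lambda$ avoiding the finitely many values $(a_i - a_1)/(b_1 - b_j)$ the polynomials $q(x)$ and $p(c - \lambda x)$ share $b$ as their unique common root, and then recover $b \in E(c)$ from the linear gcd. The gcd step you flag as the main obstacle is handled exactly as you describe, via stability of the Euclidean algorithm under field extension.

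As for comparison: the paper does not supply its own proof of Theorem~\ref{thm:primel}. It merely records the statement as a quotation from \cite[Theorem 5.1]{MFT}, noting that this precise formulation (primitivity of $a + \lambda b$ for cofinitely many $\lambda$) is the ``key ingredient'' of the standard proof rather than the usual corollary (every finite separable extension is simple). Your write-up is exactly that standard proof from Milne, so there is nothing to contrast --- you have reproduced the argument the paper is citing.
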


\begin{remark}
The statement of Theorem \ref{thm:primel} does not give a standard formulation of the primitive element theorem.  It does, however, imply one of these formulations: by inductively applying Theorem \ref{thm:primel} to an arbitrary finite extension $F$ of $E$, $F$ is a simple extension of $E$. 
\end{remark}

The next lemma relates the degree $d$ of $f$ to the rank of $\C[X]$ over $\C[f]$.

\begin{lemma}  
$\C[X]$ is a finitely generated rank $d$ free  $\C[f]$-module.   
\label{lem:rkeqdeg}
\end{lemma}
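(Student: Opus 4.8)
The plan is to prove that $\C[X]$ is a free $\C[f]$-module of rank exactly $d = [\C(X):\C(f)]$ by combining the finiteness of $f$ with the fact that $\C[f] \cong \C[s]$ is a principal ideal domain (indeed a polynomial ring in one variable over a field). First I would establish finite generation: since $f$ is finite as a regular function, $\C[X]$ is a finitely generated $\C[f]$-module by definition (equivalently, $\C[X]$ is integral over $\C[f]$, and being a finitely generated $\C$-algebra it is module-finite). This gives finite generation for free from the hypotheses.

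Next I would establish freeness and pin down the rank. The key structural input is that $\C[f] \cong \C[s]$ is a PID, so every finitely generated module over it decomposes as a free part plus a torsion part. I would argue that $\C[X]$ is torsion-free over $\C[f]$: indeed $\C[X]$ is an integral domain (as $X$ is irreducible) containing $\C[f]$, so if $h \in \C[X]$ is nonzero and $p(f) \cdot h = 0$ for some nonzero $p(f) \in \C[f]$, then $p(f) = 0$ in the domain $\C[X]$, forcing $p = 0$, a contradiction. A finitely generated torsion-free module over a PID is free, so $\C[X]$ is free of some finite rank $r$.

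To compute $r$, I would pass to the fraction field. Tensoring the free module $\C[X] \cong \C[f]^r$ with the fraction field $\C(f) = \mathrm{Frac}(\C[f])$ over $\C[f]$ yields a $\C(f)$-vector space of dimension $r$. On the other hand, $\C[X] \otimes_{\C[f]} \C(f)$ is the localization of $\C[X]$ at the nonzero elements of $\C[f]$, which sits inside $\C(X)$; since $\C[X]$ is integral over $\C[f]$ and its fraction field is $\C(X)$, this localization is precisely $\C(X)$ (every element of $\C(X)$ can be cleared to lie in $\C[X]$ after multiplying by an element of $\C[f]$, using integrality to control denominators). Therefore $r = \dim_{\C(f)} \C(X) = [\C(X):\C(f)] = d$, which is the desired conclusion.

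The main obstacle I anticipate is the final identification $\C[X] \otimes_{\C[f]} \C(f) \cong \C(X)$, i.e.\ verifying that localizing $\C[X]$ at $\C[f] \setminus \{0\}$ recovers the full function field rather than some proper subring. The containment of this localization in $\C(X)$ is immediate, but the reverse — that every rational function on $X$ becomes a fraction with denominator in $\C[f]$ — relies on $\C(X)$ being algebraic (in fact finite) over $\C(f)$ together with integrality of $\C[X]$ over $\C[f]$, so that any $h \in \C(X)$ satisfies a monic polynomial over $\C(f)$ whose coefficients have denominators in $\C[f]$. Making this clearing-of-denominators argument airtight is the delicate point; everything else is a standard application of the structure theorem for finitely generated modules over a PID.
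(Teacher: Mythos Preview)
Your argument is correct and shares its skeleton with the paper's: both use that $f$ is finite to get finite generation, then invoke the structure theorem over the PID $\C[f]$ together with torsion-freeness (from irreducibility of $X$) to conclude freeness of some rank $r$. The difference is in how $r=d$ is pinned down. The paper does it by two inequalities: clearing denominators shows any $\C[f]$-independent set is $\C(f)$-independent ($r\le d$), and the primitive element theorem produces an explicit element $\phi(p)\in\C[X]$ whose first $d$ powers are $\C(f)$-independent ($d\le r$). You instead tensor up to the fraction field and identify $\C[X]\otimes_{\C[f]}\C(f)$ with $\C(X)$. Your worry about that identification is easily resolved: since $\C[X]$ is integral over $\C[f]$, any nonzero $b\in\C[X]$ satisfies a monic relation $b^n+c_{n-1}b^{n-1}+\cdots+c_0=0$ with $c_i\in\C[f]$ and (after cancelling powers of $b$ in the domain $\C[X]$) $c_0\neq 0$, so $1/b=-(b^{n-1}+\cdots+c_1)/c_0$ lies in the localization of $\C[X]$ at $\C[f]\setminus\{0\}$. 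Your route is the standard commutative-algebra argument and is slightly slicker; the paper's route is more elementary and foreshadows the explicit use of primitive elements in the subsequent lemmas.
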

\begin{proof} 
Since $f$ is finite, $\C[X]$ is a finitely generated $\C[f]$-module.  $\C[f]$ is a principle ideal domain and $\C[X]$ is torsion-free over $\C[f]$ since $X$ is irreducible.  Hence, by \cite[Theorem 2.7.6]{B}, $\C[X]$ is a \textit{free} $\C[f]$-module.  Let $r$ denote the rank of $\C[X]$ over $\C[f]$.
  
Let $\mathcal{B}$ be any subset of $\C[X]$.  Clearing denominators transforms any linear dependence relation among the elements of $\mathcal{B}$ over $\C(f)$ into a dependence relation over $\C[f]$, so a maximal $\C[f]$-linearly independent subset of $\C[X]$ is linearly independent in $\C(X)$ over $\C(f)$.  Hence $r \leq d$.  
 
$\C(X)$ is a finite extension of $\C(f)$, so by Theorem \ref{thm:primel}, we may choose a polynomial $ p  \in \C[z_1, \ldots, z_n ]$ such that $\C(X) = \C(f)[\phi(p)]$.  The minimum polynomial for $\phi(p)$ over $\C(f)$ has degree $d$, so the set $\{ \phi(p)^j \}_{j=0}^{d-1} \subset \C[X]$ is linearly independent over $\C(f)$. In particular, $ d \leq r$.  
\end{proof}

Recalling the notation from Section \ref{sec:charvarstuff}, let $T(X)$ denote the subring of $\C[X]$ generated by the coordinate functions on $X$.  For each subring $R$ of $\C$, let $T_R(X)$ denote the $R$-algebra generated by $T(X)$ in $\C[X]$.  Note that $f  \in T_k ( X)$ for every subfield $k$ of $\C$ since $g \in \Z[z_1, \ldots, z_n]$ and $f = \phi ( g)$. Thus, each $T_k  (X )$ is a $k[  f ]$-module. We will prove the following theorem as a sequence of lemmas and propositions that use elementary Galois theory and draw heavy inspiration from \cite{BZ1} and \cite{LR}.  Our approach is motivated by the observation that the two curves studied in \cite[Example 7.16]{C} are related by complex conjugation.  

\begin{thm}
Let $k$ be a subfield of $\C$.   
  \begin{enumerate}
\item If $k \supseteq k_0$, then $T_k(X )$ is a free $k[f]$-module with rank $d$.
\item If $k \subseteq k_0$, then $T_k(X)$ is a free $k[f]$-module with rank $[k_0 : k] \cdot d$.
\end{enumerate}
\label{thm:fodrk2}
\end{thm}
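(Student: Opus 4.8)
The plan is to determine the $k[f]$-rank of $T_k(X)$ by extending scalars from $k$ to $\C$, where the answer is governed by Lemma~\ref{lem:rkeqdeg}. The key mechanism is that $\C\otimes_k k[f]=\C[f]$, so extension of scalars carries a free $k[f]$-module of rank $r$ to a free $\C[f]$-module of rank $r$; thus if I can show $T_k(X)$ is free over $k[f]$ and identify $\C\otimes_k T_k(X)$ as a $\C[f]$-module, the rank will follow from a computation over $\C$.

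First I would identify $T_k(X)$ geometrically. Since $T_k(X)=\phi(k[z_1,\dots,z_n])\cong k[z_1,\dots,z_n]/(I(X)\cap k[z_1,\dots,z_n])$ and, for any $\sigma\in\aut{\C/k}$, one has $I(X^\sigma)\cap k[z_1,\dots,z_n]=I(X)\cap k[z_1,\dots,z_n]$ (the left side is $\sigma$ applied to $I(X)\cap k[z_1,\dots,z_n]$, which $\sigma$ fixes), the ring $T_k(X)$ depends only on the union $Y=X_1\cup\cdots\cup X_e$ of the conjugates of $X$ over $k$; here $e=1$ when $k\supseteq k_0$ and $e=[k_0:k]$ when $k\subseteq k_0$. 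Concretely $T_k(X)\cong k[Y]$, and because $Y$ is stable under $\aut{\C/k}$ it is defined over $k$, so $\C\otimes_k T_k(X)\cong\C[Y]$. Note $T_k(X)$ is a domain (the contraction of the prime $I(X)$ is prime), even though $Y$ is reducible over $\C$ when $e>1$.

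Next I would prove $T_k(X)$ is free over $k[f]$. It is torsion-free because it is a domain and $f$ is transcendental, so by the structure theorem used in Lemma~\ref{lem:rkeqdeg} it suffices to show $T_k(X)$ is module-finite over $k[f]$, i.e.\ that each coordinate function is integral over $k[f]$. Working inside $\C[Y]$ (on which $\aut{\C/k}$ acts, as $Y$ is defined over $k$), each coordinate function is integral over $\C[f]$ by Lemma~\ref{lem:rkeqdeg} applied to each component. To descend the coefficients to $k[f]$, I would take a monic relation with coefficients in $L[f]$, where $L$ is the Galois closure of $k_0$ over $k$, and replace it by the product of its $\mathrm{Gal}(L/k)$-conjugates: this product is monic, still annihilates the coordinate function (which, like $f$, is fixed by $\mathrm{Gal}(L/k)$ since $g$ has integer coefficients and the $z_i$ are fixed), and has $\mathrm{Gal}(L/k)$-invariant coefficients, hence lies in $k[f][T]$. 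Therefore $T_k(X)$ is free over $k[f]$ of some finite rank $r=\mathrm{rank}_{\C[f]}\C[Y]$.

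Finally I would compute $\mathrm{rank}_{\C[f]}\C[Y]$. The restriction map $\C[Y]\hookrightarrow\prod_{j=1}^e\C[X_j]$ is injective with cokernel supported on the finitely many points where distinct conjugates meet, hence $\C[f]$-torsion, so $\mathrm{rank}_{\C[f]}\C[Y]=\sum_{j=1}^e\mathrm{rank}_{\C[f]}\C[X_j]$. Each $X_j=X^{\sigma_j}$ carries the distinguished finite function to the restriction of $g$, whose degree equals $d$ because $\sigma_j$ is an isomorphism of pairs over $\C$; so Lemma~\ref{lem:rkeqdeg} gives $\mathrm{rank}_{\C[f]}\C[X_j]=d$ for each $j$, and the total rank is $e\cdot d$. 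This yields $d$ when $k\supseteq k_0$ and $[k_0:k]\cdot d$ when $k\subseteq k_0$, as claimed. I expect the main obstacle to be the two intertwined descent steps—verifying that $\C\otimes_k T_k(X)$ really is the coordinate ring of the (reducible) conjugate locus $Y$, and that the coordinate functions are integral over $k[f]$ and not merely over $\C[f]$; correctly accounting for the reducibility of $Y$ over $\C$, so that its $\C[f]$-rank is the sum of the component ranks rather than a single $d$, is the crux of the computation.
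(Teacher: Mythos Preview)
Your argument is correct and follows the same global architecture as the paper---identify $T_k(X)$ with the $k$-coordinate ring of the union $\mathfrak{X}$ of Galois conjugates, compute the $\C[f]$-rank of $\C[\mathfrak{X}]$, and descend from $\C$ to $k$---but the individual steps are carried out by genuinely different techniques. For the rank of $\C[\mathfrak{X}]$, the paper builds an explicit $\C[f]$-linearly independent set of the correct size via a delicate simultaneous primitive-element construction (Lemma~\ref{lem:prim} and Proposition~\ref{prop:fgfree}); you instead note that the cokernel of $\C[\mathfrak{X}]\hookrightarrow\prod_j\C[X_j]$ is supported on the finite set $\bigcup_{i<j}X_i\cap X_j$ and is therefore $\C[f]$-torsion, so ranks add. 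For the descent, the paper selects a free $\C[f]$-basis lying in $T_\Z(X)$ (using \cite{C}) and shows by a Galois-fixing argument that it already spans $T_k$ over $k[f]$; you argue abstractly that $T_k(X)$ is torsion-free and module-finite over the PID $k[f]$, hence free, and read off its rank after applying $\C\otimes_k(-)$. Your route is more conceptual and sidesteps the primitive-element machinery entirely; the paper's route is more constructive and actually exhibits bases.

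One small point to tighten: you assert without proof that each coordinate function satisfies a monic relation with coefficients in $L[f]$. This is true but needs a line of justification---for instance, since $Y$ is defined over $k\subseteq L$ one has $\C\otimes_L T_L(Y)=\C[Y]$, and module-finiteness of $\C[Y]$ over $\C[f]$ descends along the faithfully flat extension $L\subseteq\C$. Alternatively, and more directly, the minimal polynomial of $z_i$ over $k(f)$ already has coefficients in $\C[f]\cap k(f)=k[f]$ (because $z_i$ is integral over $\C[f]$ and $\C[f]$ is integrally closed), which gives integrality over $k[f]$ without passing through $L$ at all.
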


By Theorem \ref{thm:csnorm}, the Culler--Shalen norm of a slope $\alpha$ is just the degree of $I_\alpha$, so Theorem \ref{thm:fodrk2} is a translation of Theorem \ref{thm:main} into algebro-geometric language. The next proposition gives the first part of Theorem \ref{thm:fodrk2}.  
 
\begin{prop}
For every field $k$ containing $k_0$, $T_k ( X)$ is a rank $d$ free $k[f]$-module 
\label{prop:eqrk}
\end{prop}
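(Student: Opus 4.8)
The plan is to show that $T_k(X)$, which sits between $k[f]$ and $\C[X]$, is itself a free $k[f]$-module of rank exactly $d$ when $k \supseteq k_0$. First I would establish freeness by the same mechanism used in Lemma \ref{lem:rkeqdeg}: since $k[f]$ is a principal ideal domain (being isomorphic to a polynomial ring in one variable over a field) and $T_k(X) \subseteq \C[X]$ is torsion-free over $k[f]$ (as $X$ is irreducible, so $\C[X]$ is a domain), it suffices to prove $T_k(X)$ is finitely generated over $k[f]$; then the structure theorem for finitely generated modules over a PID gives freeness. Finite generation should follow from the fact that $T_k(X)$ is a $k$-submodule of $\C[X]$, which is finitely generated over $\C[f]$, combined with a Noetherian or integrality argument: every element of $T_k(X)$ is integral over $k[f]$ because it is integral over $\C[f]$ (as $f$ is finite) and lies in $T_k(X)$, and $T_k(X)$ is a finitely generated $k$-algebra, so it is a finitely generated $k[f]$-module.

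Once freeness is in hand, the heart of the matter is computing the rank. I would bound it from both sides. For the upper bound, observe that any $k[f]$-linearly independent subset of $T_k(X)$ remains $\C[f]$-linearly independent in $\C[X]$ after extending scalars—clearing denominators turns a $\C(f)$-dependence into a $\C[f]$-dependence, and a $\C$-linear dependence among $k$-coefficients can be detected over $k$ by taking a $k$-basis of $\C$; so the rank of $T_k(X)$ over $k[f]$ is at most the rank $d$ of $\C[X]$ over $\C[f]$ from Lemma \ref{lem:rkeqdeg}. For the lower bound, I would produce $d$ explicit elements of $T_k(X)$ that are $k[f]$-independent. This is where the hypothesis $k \supseteq k_0$ enters decisively.

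The key structural input is that because $X$ is defined over $k_0 \subseteq k$, the coordinate functions (and hence $T_k(X)$) carry the arithmetic of $k$, and one expects $T_k(X) \otimes_{k[f]} \C[f] = \C[X]$, i.e. $T_k(X)$ is a $k[f]$-form of the $\C[f]$-module $\C[X]$. Concretely, I would invoke the primitive element theorem (Theorem \ref{thm:primel}) over the field $k(f)$ rather than $\C(f)$: since $X$ is defined over $k_0 \subseteq k$, the field extension $k(X)/k(f)$ has degree $d$ as well, and a primitive element can be chosen inside $T_k(X)$ (represented by a polynomial with coefficients in $k$). Its powers $1, \phi(p), \ldots, \phi(p)^{d-1}$ then furnish $d$ elements of $T_k(X)$ that are $k(f)$-linearly independent, forcing the rank to be at least $d$. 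Combined with the upper bound, this yields rank exactly $d$.

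The main obstacle I anticipate is justifying that the primitive element and the degree count descend correctly to $k$—specifically, that $[k(X):k(f)] = [\C(X):\C(f)] = d$ and that a primitive element can be realized by coordinate functions (elements of $T_k(X)$) rather than arbitrary elements of $\C[X]$. This requires that the function field $k(X)$ of the $k$-variety is geometrically the right object and that base change to $\C$ preserves the degree, which hinges on $X$ being geometrically irreducible over $k$—a consequence of $k \supseteq k_0$ and $X$ being an absolutely irreducible component defined over $k_0$. Handling this degree-preservation under base change, and ensuring no drop in the number of $k$-independent generators occurs, is the delicate step; the freeness and the upper bound are comparatively routine.
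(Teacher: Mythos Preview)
Your approach is valid in outline but takes a genuinely different route from the paper's. The paper does not argue by bounding the rank from above and below. Instead it invokes an external fact (from \cite{C}) that one can choose a free basis $\mathcal{B}=\{b_j\}_{j=1}^d$ for $\C[X]$ over $\C[f]$ lying already in $T_\Z(X)$, and then shows directly that this same $\mathcal{B}$ is a $k[f]$-basis for $T_k(X)$: given $h\in T_k(X)$ one writes $h=\sum p_j b_j$ with $p_j\in\C[f]$, applies each $\sigma\in\aut{\C/k}$ (which acts on $\C[X]$ precisely because $k_0\subseteq k$), and concludes $p_j^\sigma=p_j$ by uniqueness of the coefficients, so $p_j\in k[f]$. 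This Galois-descent argument is short and gives an explicit basis uniform in $k$; your argument is more self-contained (it does not need the cited $\Z$-basis) but has more moving parts.

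One point in your sketch deserves a warning. You assert that every element of $T_k(X)$ is integral over $k[f]$ ``because it is integral over $\C[f]$ and lies in $T_k(X)$''. That implication is not automatic: integrality over $\C[f]$ gives a monic relation with coefficients in $\C[f]$, and you must still force those coefficients into $k[f]$. The clean fixes are either (i) the Galois argument above---apply $\sigma\in\aut{\C/k}$ to the minimal polynomial over $\C(f)$ and use uniqueness---which is exactly the paper's mechanism, or (ii) first establish the base-change isomorphism $T_k(X)\otimes_k\C\cong\C[X]$ (valid since $I(X)$ is generated over $k$ when $k\supseteq k_0$) and then deduce finiteness of $T_k(X)$ over $k[f]$ by faithfully flat descent. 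You gesture at (ii) later when discussing the rank, but you should invoke it earlier to close the finite-generation step; once $T_k(X)\otimes_k\C\cong\C[X]$ is in hand, both the upper bound and the equality $[k(X):k(f)]=d$ follow immediately, and your primitive-element detour becomes unnecessary.
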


\begin{proof}
By the remark following Corollary 2.5 in \cite{C} we may pick a free basis $\mathcal{B} = \{ b_j\}_{j=1}^d$ for $\C[X]$ over $\C[f]$ which lies in $T_\Z (X)$.  Then $\mathcal{B}$ is linearly independent over $k[f] \subseteq \C[X]$.  

We claim that $\mathcal{B}$ spans $T_k(X)$ over $k[f]$.  Take  $h \in  T_k (X)$.    There are unique $p_j \in \C[f]$ so that $ h = \sum p_j b_j$ in $\C[X]$.  Since $k_0 \subseteq k$, the group $\aut{\C/k}$ acts naturally on $\C[X]$.  If $\sigma \in \aut{\C/k}$, then 
\[ h = \sigma \cdot h = \sum  p_j^\sigma b_j \quad \Rightarrow \quad 0 = \sum (p_j - p^\sigma_j)b_j. \]
Thus $p_j = p^\sigma_j$ for every $\sigma \in \aut{\C/k}$.  Since $k$ is perfect and $\C$ is algebraically closed, the fixed field of $\aut{ \C / k}$ is $k$ \cite[Theorem 9.29]{MFT}. This implies that  $p_j \in k[f]$, so $\mathcal{B}$ spans $T_k ( X)$ over $k[f]$. In particular, $T_k(X)$ is a rank $d$ free $k[f]$-module. 
\end{proof}

The proof of the second part of Theorem \ref{thm:fodrk2} is more involved.  The basic idea is this: if $k \subseteq k_0$, then there are precisely $[k_0: k]$ varieties conjugate to $X$ over $k$ \cite[Theorem III.10]{L}.  The finite regular function $f \co X \to \C$ extends to a finite regular function on each conjugate variety and the rank of their coordinate rings over $\C[f]$ is $d$.  If $\mathfrak{X}$ is the union of the conjugate varieties to $X$ over $k$, we show the rank of $\C[ \mathfrak{X} ]$ over $\C[f]$ is $d \cdot [k_0:k]$.  Next, we show that $T_k (\mathfrak{X} )$ is isomorphic to $T_k(X)$ as both rings and $k[f]$-modules and the result follows by an argument similar to the proof of Proposition \ref{prop:eqrk}.

With this sketch in mind, we begin with a slightly more general set up.  Fix a collection of distinct irreducible curves $ \mathfrak{X} = X_1 \cup \cdots \cup X_m \subset \C^n$. Let $\phi \co \C[z_1, \ldots, z_n] \to \C[\mathfrak{X}]$ and $\phi_i \co \C[z_1, \ldots, z_n] \to \C[X_i]$ denote the natural quotients.   Set $f_i = \phi_i ( g)$, $f = \phi ( g)$,  and suppose that $f_i$ is a finite, dominant regular function on $X_i$ with degree $d_i$.

We begin our investigation of the rank of $\C[\mathfrak{X} ]$ over $\C[f]$ with a lemma.  Thinking geometrically the lemma essentially says that we can pick a ``direction" such that a regular map $\mathfrak{X} \to \C^2$ whose first coordinate is $f$ projects the $X_i$'s onto distinct curves in $\C^2$. 

\begin{lemma}
There is a polynomial $p \in \C[z_1, \ldots, z_n] $ such that 
\begin{enumerate}
 \item $p_i = \phi_i ( p ) $ is a primitive element of $\C(X_i)$ over $\C(f_i)$, and
 \item the minimum polynomial $m_i(t) \in \C(f_i)[t]$ for $p_i$ in $\C(X_i)$ is a monic irreducible polynomial of degree $d_i$ with coefficients in $\C[f_i]$, and 
 \item for each $i$ there is a unique polynomial $q_i ( s, t ) \in \C[s,t]$ such that $m_i (t) = q_i ( f_i , t )$ and \[q_i ( s, t) = q_j ( s, t) \iff i = j. \]  
\end{enumerate}
\label{lem:prim}
\end{lemma}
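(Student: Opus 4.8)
The plan is to take $p$ to be a generic linear form $p_\lambda = \lambda_1 z_1 + \cdots + \lambda_n z_n$ and to show that all three conditions hold for every $\lambda \in \C^n$ lying outside a finite union of proper affine hyperplanes; since $\C$ is infinite, such a $\lambda$ then exists. I phrase each requirement as the avoidance of finitely many hyperplanes in the parameter space $\{\lambda\} = \C^n$. Write $p_i^\lambda = \phi_i(p_\lambda) \in \C[X_i]$. Because $f_i$ is finite on $X_i$, the coordinate ring $\C[X_i]$ is integral over $\C[f_i]$, so $p_i^\lambda$ is integral over $\C[f_i]$; and $\C[f_i] \cong \C[s]$ is integrally closed, being a polynomial ring. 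Consequently, once $p_i^\lambda$ is a primitive element of $\C(X_i)$ over $\C(f_i)$, its minimum polynomial $m_i(t)$ automatically has coefficients in $\C[f_i]$ and degree $d_i = [\C(X_i):\C(f_i)]$, which gives (2) for free. Writing each coefficient of $m_i$ as a polynomial in $f_i$ — uniquely, since $f_i$ is transcendental over $\C$ — then produces the unique $q_i(s,t) \in \C[s,t]$ with $m_i(t) = q_i(f_i,t)$ demanded in (3).

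For (1) I use the fact underlying Theorem \ref{thm:primel}: with $\sigma_1,\dots,\sigma_{d_i}$ the embeddings $\C(X_i) \to \overline{\C(f_i)}$ fixing $\C(f_i)$, the element $p_i^\lambda$ fails to be primitive exactly when two of them agree on it, i.e. when $\sum_k \lambda_k\bigl(\sigma(\phi_i(z_k)) - \tau(\phi_i(z_k))\bigr) = 0$ for some $\sigma \neq \tau$. For each such pair some coordinate function differs, so this is a nonzero linear condition on $\lambda$; hence for each fixed $i$ the non-primitive $\lambda$ lie in a finite union of proper hyperplanes. Geometrically, this says that $p_\lambda$ separates the $d_i$ points of a generic fiber $g^{-1}(c)\cap X_i$, these being reduced for generic $c$ since $f_i$ is finite of degree $d_i$ in characteristic $0$.

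The main work — and where I expect the real obstacle to lie — is the distinctness clause of (3), namely $q_i = q_j \Rightarrow i = j$. Fix a pair $i \neq j$. Since $X_i \neq X_j$ are irreducible curves, $X_i \not\subseteq X_j$, so $X_i \setminus X_j$ is dense in $X_i$ and its image under the finite dominant map $g|_{X_i}$ is dense in $\C$. Choosing $c \in \C$ generic, I may therefore find $a \in g^{-1}(c)\cap X_i$ with $a \notin X_j$, while arranging that $g^{-1}(c)\cap X_i$ and $g^{-1}(c)\cap X_j$ are reduced of sizes $d_i$ and $d_j$. For such a good specialization (and $\lambda$ already primitive as in (1)), $m_i$ is the characteristic polynomial of the free rank-$d_i$ module $\C[X_i]$ over $\C[f_i]$, so $q_i(c,t) = \prod_{x \in g^{-1}(c)\cap X_i}\bigl(t - p_\lambda(x)\bigr)$ and likewise for $j$. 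Thus $q_i = q_j$ would force the multiset equality $\{p_\lambda(x) : x \in g^{-1}(c)\cap X_i\} = \{p_\lambda(y) : y \in g^{-1}(c)\cap X_j\}$, whence $p_\lambda(a) = p_\lambda(y)$ for some $y$ in the $X_j$-fiber. Every such $y$ satisfies $y \neq a$ (as $a \notin X_j$), so $\sum_k \lambda_k(a_k - y_k) = 0$ is again a proper linear condition on $\lambda$. After fixing $c$ and $a$, the locus of $\lambda$ with $q_i^\lambda = q_j^\lambda$ therefore lies in a finite union of proper hyperplanes, one per point $y$ of the $X_j$-fiber.

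Finally I combine everything. The primitivity conditions (one per curve $X_i$) and the distinctness conditions (one per unordered pair $i \neq j$) are finite in number and each excludes only a finite union of proper affine hyperplanes of $\C^n$, so their total union is a proper Zariski-closed subset. As $\C$ is infinite, this subset is not all of $\C^n$, and any $\lambda$ in the complement gives a linear form $p = p_\lambda$ satisfying (1), (2), and (3) simultaneously. The reverse implication $i = j \Rightarrow q_i = q_j$ in (3) is trivial, which completes the argument.
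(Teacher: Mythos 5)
Your proof is correct, but it takes a genuinely different route from the paper's. The paper argues for $m=2$ and then inducts: it applies Theorem \ref{thm:primel} to get primitive elements $\phi_i(r_i)$, takes $p = r_1 + \lambda r_2$ for a suitable scalar $\lambda$, defines $q_i$ as the normalized defining polynomial of the image curve $Y_i$ of $(f_i,p_i)\co X_i \to \C^2$, and enforces distinctness by perturbing $p$ by $\lambda r$ with $r \in I(X_1)-I(X_2)$, observing that the perturbation leaves $Y_1$ unchanged while moving $Y_2$ off of it. You instead take $p$ to be a single generic linear form in the ambient coordinates and impose all requirements simultaneously: primitivity for each $i$ via counting embeddings of $\C(X_i)$ into $\overline{\C(f_i)}$ (the standard proof of the primitive element theorem), and distinctness for each pair $i \neq j$ via specialization at a generic reduced fiber, using that the characteristic polynomial of multiplication by $p_i$ commutes with base change and that a fiber point $a \in X_i \setminus X_j$ forces the root multisets apart. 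Each bad condition is the vanishing of a nontrivial linear form in $\lambda$ (with coefficients possibly in an extension field of $\C$, so strictly speaking you get proper subspaces rather than hyperplanes, but the covering argument is unaffected), and finitely many proper subspaces cannot cover $\C^n$. Your approach buys uniformity (no induction on $m$), an explicit shape for $p$, and a cleaner definition and uniqueness of $q_i$ straight from the coefficients of $m_i$ via transcendence of $f_i$; its cost is reliance on standard but heavier machinery about finite morphisms in characteristic $0$ (generic reducedness of fibers, specialization of characteristic polynomials, and freeness of $\C[X_i]$ over $\C[f_i]$ from Lemma \ref{lem:rkeqdeg}), where the paper stays at the level of field theory and image curves. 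One point worth adding to your write-up: the paper's $q_i$ is irreducible in $\C[s,t]$ by construction (it defines the irreducible curve $Y_i$), and this irreducibility is used later in the proof of Proposition \ref{prop:fgfree}; your $q_i$ is irreducible as well, but you should say so --- it follows from Gauss's lemma, since $q_i$ is monic in $t$ and $q_i(f_i,t) = m_i(t)$ is irreducible over $\C(f_i)$.
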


\begin{proof}
We prove the lemma in the case that $m = 2$. The general result follows by induction. Theorem \ref{thm:primel} gives a polynomial $r_i \in \C[z_1 , \ldots, z_n]$ such that $\phi_i( r_i)$ is primitive in $\C(X_i)$ over $\C(f_i)$ for each $i$.  Thus
\[
\begin{array}{cccccc}
 & \C(X_1)&  = &  \C(f_1)[\phi_1 ( r_1)] &= &  \C(f_1)[\phi_1( r_1) , \phi_1( r_2) ] \\
 \text{and}  & \C(X_2)  &  = &  \C(f_2)[ \phi_2 ( r_2)] &  = & \C(f_2)[ \phi_2(r_2) , \phi_2( r_1)].
\end{array}\] 
By Theorem \ref{thm:primel} we can pick $\lambda \in \C^\times$ so that  
\[ \C(X_1 ) = \C(f_1)[ \phi_1( r_1 + \lambda   r_2)  ] \quad \mathrm{and}   \quad \C(X_2) = \C(f_2)[ \phi_2( r_2 + \inv{\lambda}   r_1) ]. \]
In particular, if  $p = r_1 + \lambda r_2 \in \C[z_1, \ldots, z_n]$, then $p_i = \phi_i(p)$ is primitive in $\C(X_i)$ over $\C(f_i)$ for each $i$.   

The regular function $f_i \co X_i \to \C$ is finite, so  $\C[X_i]$ is a finitely generated $\C[f_i]$-module. Hence, by \cite[Proposition 5.1]{AM}, $\C[X_i]$ is integral over $\C[f_i]$.  Since $\C[f_i]$ is integrally closed in $\C(f_i)$, the minimum polynomial $m_i(t) \in \C(f_i)[t]$ of $p_i$ in $\C(X_i)$ is monic, irreducible, and has coefficients in $\C[f_i]$ \cite[Proposition 5.15]{AM}.  Since  $p_i$ is primitive in $\C(X_i)$,  $m_i(t)$ has degree $d_i = [\C(X_i) : \C(f_i)]$. 

Now consider the regular map $(f_i, p_i ) \co X_i \to \C^2$.  The regular function $f_i$ is dominant so $(f_i, p_i)$ is non-constant, which implies that the Zariski-closure of its image is a curve $Y_i$.  The closure of the image of an irreducible algebraic set is irreducible, so $Y_i$ is the zero set of a single irreducible polynomial $q_i(s, t) \in \C[s, t]$.  By construction, $q_i (g, p ) \in I(X_i) $ and hence $q_i (f_i, t)$ is divisible by $m_i (t)$ in $\C(f_i)[t]$.  But $q_i ( s,t)$ is irreducible, so we must have $q_i ( f_i , t) = c \cdot  m_i ( t)$ for some $c \in \C^\times$. Thus, after multiplying by a scalar, we may assume $q_i ( s, t)$ is monic of degree $d_i$ in $t$.  The uniqueness of $q_i(s,t)$ now follows from this choice of normalization and the irreducibility of $q_i ( s, t)$. 

Lastly, we show that we can adjust $p$ so that $q_1 ( s, t) \neq q_2 ( s, t)$ if necessary.  $X_1$ and $X_2$ are distinct irreducible curves, so there is a polynomial $r  \in I(X_1) - I(X_2)$.  The ideal $I(X_2)$ is prime, so $\lambda r \in I(X_1) - I(X_2)$ for every $\lambda \in \C^\times$. Note that $m_2(t) = q_2 ( f_2, t)$ has at most $d_1 = d_2$ roots in $\C(X_2)$, so for all but finitely many $\lambda \in \C^\times$, $m_2 ( \phi_2 ( p + \lambda r) ) \neq 0 $ in $\C[X_2]$. Moreover, avoiding another finite collection of scalars, we may choose $\lambda \in \C^\times$ so that $p_i' = \phi_i ( p + \lambda r )$ is primitive in $\C(X_i)$ over $\C(f_i)$. 

If $q_1(s,t) = q_2(s,t)$, then the curves $Y_1$ and $Y_2$ are equal. Since $ \lambda r \in I(X_1)$, $p_1' \equiv p_1$ in $\C[X_1]$ and the closure of the  image of the regular map $(f_1, p_1') \co X_1 \to \C^2$ is $Y_1$.  On the other hand,  $ m_2 ( p_2' ) \neq 0$ in $\C(X_2)$, so the image of the map $(f_2 , p_2') \co X_2 \to \C^2$ is an irreducible curve $Y_2'$ distinct from $Y_1 = Y_2$.    Hence there is a polynomial $q_2' ( s, t)$ that is monic of degree $d_2$ in $t$ and irreducible with $q_2'(s,t) \neq q_1(s, t)$ such that  $q_2'( f_2 , t)$ is the minimum polynomial for $p_2'$ in $\C(X_2)$ over $\C(f_2)$. 
\end{proof}

We can now relate the rank of $\C[\mathfrak{X} ]$ over $\C[f]$ to that of  the $\C[X_i]$'s over $\C[f_i]$.  Notice that  $\C[X_i]$ is a finitely generated free $\C[f]$-module by defining $f \cdot h = f_i h $ for each $h \in \C[X_i]$. The rank of $\C[X_i]$ over $\C[f]$ is equal to that of $\C[X_i]$ over $\C[f_i]$.   

\begin{prop}
$\C[\mathfrak{X} ]$ is a free $\C[f]$-module with rank $D = \sum_1^m d_i$.  
\label{prop:fgfree}
\end{prop}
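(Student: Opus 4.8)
The plan is to realize $\C[\mathfrak{X}]$ inside $\prod_{i=1}^m \C[X_i]$, deduce freeness over the principal ideal domain $\C[f]$, and then compute the rank by base change to $\C(f)$. First I would introduce the $\C[f]$-linear map $\psi \co \C[\mathfrak{X}] \to \prod_{i=1}^m \C[X_i]$ assembled from the restriction homomorphisms $\C[\mathfrak{X}] \to \C[X_i]$, where $\C[f]$ acts on the $i$-th factor by $f\cdot h = f_i h$ as set up above. The map $\psi$ is injective, since a regular function vanishing on each $X_i$ vanishes on $\mathfrak{X}$. Because each $f_i$ is finite, $\C[X_i]$ is a finitely generated $\C[f]$-module, so $\prod_i \C[X_i]$ is finitely generated over the Noetherian ring $\C[f]$; hence its submodule $\C[\mathfrak{X}]$ is finitely generated as well.

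Next I would verify torsion-freeness: if $P(f)\cdot h = 0$ with $0\neq P\in\C[s]$, then restricting to $X_i$ gives $P(f_i)\,h_i = 0$ in the domain $\C[X_i]$, and $P(f_i)\neq 0$ because $f_i$ is non-constant, so $h_i = 0$ for all $i$ and $h=0$ by injectivity of $\psi$. As in Lemma \ref{lem:rkeqdeg}, a finitely generated torsion-free module over the PID $\C[f]$ is free, so the rank equals $\dim_{\C(f)}\bigl(\C[\mathfrak{X}]\otimes_{\C[f]}\C(f)\bigr)$, and it remains to show this dimension is $D$.

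To compute it I would base change along the flat localization $\C[f]\hookrightarrow\C(f)$. Flatness preserves the injection $\psi$, embedding $\C[\mathfrak{X}]\otimes_{\C[f]}\C(f)$ into $\prod_i\bigl(\C[X_i]\otimes_{\C[f_i]}\C(f_i)\bigr)$; each factor is a finite-dimensional domain over the field $\C(f_i)\cong\C(f)$, hence the field $\C(X_i)$ of degree $d_i$, so the target is $\prod_i\C(X_i)$. For surjectivity I would invoke Lemma \ref{lem:prim}: the element $\phi(p)\in\C[\mathfrak{X}]$ restricts to $p_i$ on $X_i$, so the $\C(f)$-subalgebra it generates is the image of $\C(f)[t]\to\prod_i\C(X_i)$, $t\mapsto(p_i)_i$, whose kernel is $\bigcap_i(q_i(f,t))=\bigl(\prod_i q_i(f,t)\bigr)$ because the $q_i(f,t)$ are distinct monic irreducibles, hence pairwise coprime, by part (3) of the lemma. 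The Chinese Remainder Theorem then gives $\C(f)[t]/\bigl(\prod_i q_i(f,t)\bigr)\cong\prod_i\C(f)[t]/(q_i(f,t))\cong\prod_i\C(X_i)$, of dimension $\sum_i d_i=D$. Being trapped between this subalgebra and $\prod_i\C(X_i)$, the base change equals $\prod_i\C(X_i)$, so the rank is $D$.

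The main obstacle is precisely this surjectivity, namely showing that inverting $\C[f]\setminus\{0\}$ already recovers the full product of function fields rather than a proper subalgebra, and the coprimality clause of Lemma \ref{lem:prim} is what powers it. Alternatively, one can sidestep the primitive element and argue geometrically that $\operatorname{coker}\psi$ is supported on the finite set $\bigcup_{i<j}(X_i\cap X_j)$ of pairwise intersection points, so it is a finite-dimensional $\C$-vector space and hence $\C[f]$-torsion, whence it vanishes after base change and $\C[\mathfrak{X}]\otimes_{\C[f]}\C(f)\cong\prod_i\C(X_i)$ directly.
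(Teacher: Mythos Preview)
Your proof is correct and follows essentially the same approach as the paper: embed $\C[\mathfrak{X}]$ into $\prod_i \C[X_i]$ to obtain freeness and rank $\leq D$ over the PID $\C[f]$, then invoke Lemma~\ref{lem:prim} and the distinctness of the $q_i$ to produce the lower bound. The only difference is packaging: the paper exhibits $\{p^j\}_{j=0}^{D-1}$ as a $\C[f]$-linearly independent set directly, by constructing an injection $\C[\mathfrak{Y}]\hookrightarrow\C[\mathfrak{X}]$ from the plane curve $\mathfrak{Y}=Z\bigl(\prod_i q_i\bigr)$, whereas you base-change to $\C(f)$ and use the Chinese Remainder Theorem to identify $\C[\mathfrak{X}]\otimes_{\C[f]}\C(f)$ with $\prod_i \C(X_i)$.
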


\begin{proof}
Consider the homomorphism $ \C[\mathfrak{X} ] \to \oplus_1^m \C[X_i]$ given by $ \phi ( h)  \mapsto (\phi_i( h) )_{i =1}^m$ for $h \in \C[z_1, \ldots, z_n]$. Since $I(\mathfrak{X}) = \cap_1^m I(X_i)$, it is well-defined and  injective. It is easy to see that the homomorphism is $\C[f]$-linear.   By Lemma \ref{lem:rkeqdeg} and our observations above, $\oplus_1^m \C[X_i ]$ is a finitely generated free $\C[f]$-module.  In particular, $\C[\mathfrak{X}]$ is finitely generated and free with rank at most $D$ over $\C[f]$.

Fix a polynomial $P \in \C[z_1, \ldots, z_n]$ given by Lemma \ref{lem:prim}.  Set $p_i = \phi_i ( P)$ and $p = \phi ( P)$.  Let $q_i ( s, t) \in \C[s,t]$ be the polynomial from the third part of the lemma and define  $q(s, t) = \prod_1^m q_i ( s, t)$. Note that $q(s,t)$ is monic of degree $D$ in $t$ with no repeated factors.  By construction, $q_i( g, P) \in I ( X_i)$ for each $i$ and hence $q(g, P ) \in I ( \mathfrak{X} )$.

As in the proof of Lemma \ref{lem:prim}, let $Y_i$ denote the Zariski closure of the image of the regular map $(f_i,p_i) \co X_i \to \C^2$ and let $\mathfrak{Y} = Y_1 \cup \cdots \cup Y_m$.  Then  
\[Y_i = Z(  q_i(s, t)  ) \quad \text{and} \quad \mathfrak{Y} = Z(  q(s,t)  ) \] 
Since $q(s, t)$ has no repeated factors, the ideal it generates in $\C[s,t]$ is radical. Hence $I(\mathfrak{Y})$ is principle and generated by $q(s,t)$.    Let $\psi_i$ and $\psi$ denote the natural quotients from $\C[s,t]$ to $\C[Y_i]$ and $\C[\mathfrak{Y} ]$ respectively.   Since $(f_i, p_i) \co X_i \to Y_i$ is dominant, the induced homomorphism $\C[Y_i ] \to \C[X_i]$, which takes $\psi_i (s) \mapsto f_i$ and $\psi_i(t) \mapsto p_i$,  is injective.  

\begin{claim} The $\C$-algebra homomorphism $F \co \C[\mathfrak{Y}] \to \C[\mathfrak{X}]$ defined by $F( \psi(s))$ = $f$  and $F( \psi ( t)) = p$ is injective. 
\end{claim}  
The proof of the proposition follows readily from this claim. We show that the set $\{ p^j\}_{j=0}^{D-1}$ is linearly independent in $\C[\mathfrak{X}]$. Suppose we have $r_j(s) \in \C[s]$ such that $\sum_0^{D-1} r_j(f)  p^j =0$ in $\C[\mathfrak{X}]$.  Then, by the claim,  $\sum_0^{D-1} r_j(s)t^j  \in I(\mathfrak{Y} )$ which implies that $q(s,t)$ divides $ \sum_0^{D-1} r_j(s)t^j$. But $q$ is monic of degree $D$ in $t$, so each $r_j (s)$ must be 0.

To prove the claim, first note that if $u- v  \in I(\mathfrak{Y} )$ for some $u, v \in \C[s, t]$, then $q$ divides $u -v$ and so $u(g, P) - v(g,P) \in I(\mathfrak{X})$.  Hence $F$ is well-defined. Now take $r(s, t) \in \C[s,t]$ with $r(g, P) \in I(\mathfrak{X})$. Then $r(g, P) \in I(X_i)$ for each $i$. The homomorphism $\C[Y_i] \to \C[X_i]$ induced by $f_i$ is injective, so $ r(s, t) \in I(Y_i)$.  Thus, each $q_i ( s, t)$ divides $r(s,t)$.  But the $q_i$'s are distinct and irreducible, so $q(s,t)$ divides $r(s,t)$.  Hence $r(s,t) \in I(\mathfrak{Y})$, which proves the claim. 
\end{proof}

We now return to our proof of Theorem \ref{thm:fodrk2}.  Take  subfield $k $ of the number field $k_0$. There are $m = [k_0:k]$ varieties in $\C^n$ conjugate to $X$ over $k$ \cite[Ch. III, Thm. 10]{L}. Label them $X_1, \ldots, X_m$ with $X_1 = X$ and set $\mathfrak{X} = \cup_1^m X_i$. $\mathfrak{X}$ is invariant under the action of $\aut{\C/k}$ on $\C^n$, so $\mathfrak{X}$ is defined over $k$ \cite[Ch. III, Thm. 7]{L}.  Recall the notation $f = \phi ( g )$ and $f_i = \phi_i (g)$  and the $\C[f]$-module structure on each $\C[X_i]$ from the preamble to Proposition \ref{prop:fgfree}.


\begin{lemma}
As a $\C[f]$-module, $\C[X_i]$ is free with rank $d$ and $\C[\mathfrak{X} ]$ is free with rank $m \cdot d$. 
\label{lem:conrk}
\end{lemma}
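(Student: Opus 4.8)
The plan is to exploit the fact that each conjugate variety $X_i$ is, up to a field automorphism, a copy of $X$, together with the fact that the finite function $f$ is defined by a polynomial $g$ with \emph{integer} coefficients. Concretely, since $X_i = X^\sigma$ for some $\sigma \in \aut{\C/k}$, the map $p \mapsto p^\sigma$ on $\C[z_1, \ldots, z_n]$ carries $I(X)$ onto $I(X^\sigma) = I(X_i)$ and therefore descends to a ring isomorphism $\bar\sigma \co \C[X] \to \C[X_i]$ satisfying $\bar\sigma(\phi(p)) = \phi_i(p^\sigma)$. This $\bar\sigma$ is $\sigma$-semilinear over $\C$, i.e.\ $\bar\sigma(c \cdot h) = \sigma(c)\,\bar\sigma(h)$ for $c \in \C$, but it is an honest ring isomorphism of integral domains, which extends uniquely to an isomorphism of fraction fields $\C(X) \to \C(X_i)$.

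First I would record the one computation that makes everything work: because $g \in \Z[z_1, \ldots, z_n]$ and $\sigma$ fixes $\Z$, we have $g^\sigma = g$, so $\bar\sigma(f) = \bar\sigma(\phi(g)) = \phi_i(g^\sigma) = \phi_i(g) = f_i$. Consequently $\bar\sigma$ carries the subring $\C[f]$ onto $\C[f_i]$ and restricts to a field isomorphism $\C(f) \to \C(f_i)$ sitting inside the isomorphism $\C(X) \to \C(X_i)$.

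From here the first assertion is immediate. Since $\bar\sigma$ is a ring isomorphism carrying $\C[f]$ onto $\C[f_i]$, it carries the integral extension $\C[f] \subseteq \C[X]$ onto $\C[f_i] \subseteq \C[X_i]$; hence $\C[X_i]$ is integral, and therefore finite, over $\C[f_i]$, and $f_i$ is non-constant (being $\bar\sigma(f)$) and so dominant. Moreover the induced isomorphism of fraction fields maps a $\C(f)$-basis of $\C(X)$ to a $\C(f_i)$-basis of $\C(X_i)$, giving $[\C(X_i):\C(f_i)] = [\C(X):\C(f)] = d$, so $f_i$ has degree $d$. Applying Lemma \ref{lem:rkeqdeg} to $X_i$ with the finite function $f_i$ then shows $\C[X_i]$ is a free $\C[f_i]$-module of rank $d$; as noted in the preamble to Proposition \ref{prop:fgfree}, its rank as a $\C[f]$-module is the same, namely $d$. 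Finally, the distinct conjugates $X_1, \ldots, X_m$ are distinct irreducible curves and each $f_i$ is finite, dominant, of degree $d_i = d$, which are exactly the hypotheses of Proposition \ref{prop:fgfree}; that proposition yields that $\C[\mathfrak{X}]$ is a free $\C[f]$-module of rank $D = \sum_1^m d_i = m \cdot d$.

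The only delicate point I expect is bookkeeping the semilinearity of $\bar\sigma$: one must stay alert to the fact that $\bar\sigma$ twists scalars by $\sigma$, so it transports a $\C[f]$-module structure to a $\C[f_i]$-module structure rather than preserving a fixed $\C[f]$-action on the nose. Because degrees of field extensions and the property of being free of a prescribed rank are invariant under any ring isomorphism, this twisting causes no genuine difficulty once it is acknowledged explicitly, but it is the step where a careless argument could go wrong.
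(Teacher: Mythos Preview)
Your proof is correct and follows the same approach as the paper: both conjugate by some $\sigma \in \aut{\C/k}$ to transport the module-theoretic data from $X$ to $X_i$ (using $g^\sigma = g$), and then invoke Proposition~\ref{prop:fgfree} for $\mathfrak{X}$. Your explicit care with the $\sigma$-semilinearity of $\bar\sigma$ is, if anything, a slight improvement on the paper's phrasing, which loosely calls the induced isomorphism ``$\C[f]$-linear''; your detour through re-verifying $\deg f_i = d$ and re-applying Lemma~\ref{lem:rkeqdeg} cleanly sidesteps that issue.
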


\begin{proof}
Let $\sigma_i \in \aut{\C/k}$ be any automorphism such that $\sigma_i \cdot X_1 = X_i$. As an automorphism of   $\C[z_1, \ldots, z_n]$, $\sigma_i$ descends to an isomorphism $\sigma_i^\ast \co \C[X_1] \to \C[X_i]$ taking $\phi_1( h ) \mapsto \phi_i ( h^ \sigma)$ for $h \in \C[z_1, \ldots, z_n]$.   Then $\sigma_i^\ast$ is $\C[f]$-linear since $\sigma_i \cdot f_1 = f_i$ and $f \cdot p = f_i p$ for $p \in \C[X_i]$.  Thus $\C[X_i]$ is free with rank $d$ over $\C[f]$.  

By Proposition \ref{prop:fgfree}, $\C[\mathfrak{X}]$ is free with rank $m \cdot d$ over $\C[f]$. 
\end{proof}

\begin{lemma}
As rings, $k$-algebras, and $k[f]$-modules, $ T_k (X)$ is isomorphic to $T_k (\mathfrak{X}).$ 
\label{lem:x1eqx}
\end{lemma}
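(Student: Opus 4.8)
The plan is to realize the isomorphism as the restriction of the coordinate projection $\pi_1 \co \C[\mathfrak{X}] \to \C[X_1]$ onto the first factor (here I interpret $T_k(\mathfrak{X})$, as for an irreducible component, as the $k$-algebra generated inside $\C[\mathfrak{X}]$ by the coordinate functions). First I would record the two descriptions $T_k(\mathfrak{X}) = \phi(k[z_1,\ldots,z_n])$ and $T_k(X) = T_k(X_1) = \phi_1(k[z_1,\ldots,z_n])$, which hold because $T_k$ of an embedded variety is exactly the image of the $k$-coefficient polynomial ring under the defining quotient. Since $I(\mathfrak{X}) = \cap_i I(X_i) \subseteq I(X_1)$, the assignment $\phi(h) \mapsto \phi_1(h)$ descends to a well-defined $\C$-algebra homomorphism $\pi_1$, which is in particular a ring and $k$-algebra homomorphism. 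It is moreover $k[f]$-linear, since $\pi_1(f) = \pi_1(\phi(g)) = \phi_1(g) = f_1$, so $f$ acts on the target through $f_1$, which is precisely the module structure on $T_k(X)$.

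Next I would check surjectivity, which is immediate: $\pi_1$ sends each generator $\phi(z_j)$ of $T_k(\mathfrak{X})$ to the generator $\phi_1(z_j)$ of $T_k(X_1)$, so its image is a $k$-subalgebra of $T_k(X)$ containing all the coordinate functions, hence all of $T_k(X)$.

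The crux—and the only place the conjugacy hypothesis enters—is injectivity. I would take $h \in k[z_1,\ldots,z_n]$ with $\pi_1(\phi(h)) = \phi_1(h) = 0$, i.e.\ $h \in I(X_1)$, and show $\phi(h) = 0$, i.e.\ $h \in I(\mathfrak{X})$. For each $i$ pick $\sigma_i \in \aut{\C/k}$ with $X_i = X_1^{\sigma_i}$. Because $h$ has coefficients in $k$ and $\sigma_i$ fixes $k$ pointwise, $h^{\sigma_i} = h$; and since $I(X_1^{\sigma_i}) = \{ p^{\sigma_i} \mid p \in I(X_1) \}$, the membership $h \in I(X_1)$ yields $h = h^{\sigma_i} \in I(X_i)$. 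Thus $h$ lies in every $I(X_i)$, hence in $I(\mathfrak{X})$, forcing $\phi(h) = 0$. This makes $\pi_1$ an isomorphism of rings, $k$-algebras, and $k[f]$-modules simultaneously.

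The main obstacle is essentially just the Galois-theoretic bookkeeping in the injectivity step; the rest is formal. The one subtlety I would be careful to handle is keeping the two module structures distinct—$f = \phi(g)$ on $\mathfrak{X}$ versus $f_1 = \phi_1(g)$ on $X = X_1$—and confirming explicitly that $\pi_1(f) = f_1$ before asserting $k[f]$-linearity, so that the single symbol $f$ in the statement is unambiguous across the isomorphism.
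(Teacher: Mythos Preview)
Your proof is correct and is essentially the paper's argument: the paper shows $T_k(X)\cong k[z_1,\ldots,z_n]/I_k(X)$ and $T_k(\mathfrak{X})\cong k[z_1,\ldots,z_n]/I_k(\mathfrak{X})$ and then proves $I_k(X)=I_k(\mathfrak{X})$, which is exactly your injectivity step (a $k$-coefficient polynomial vanishing on $X_1$ is fixed by each $\sigma_i$ and hence vanishes on every $X_i$). The only cosmetic difference is that the paper packages this via the extension ideal $J=\C\cdot I_k(X)$ being $\aut{\C/k}$-stable, whereas you argue element-by-element; the content, including the $k[f]$-linearity check via $g\in\Z[z_1,\ldots,z_n]$, is the same.
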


\begin{proof}
Composing the inclusion and the natural quotient 
\[ k[z_1, \ldots, z_n] \hookrightarrow \C[z_1, \ldots, z_n] \twoheadrightarrow  \C[X]\] 
gives a surjection $k[z_1, \ldots, z_n] \to T_k(X)$ with kernel $I_k(X) := I(X) \cap k[z_1, \ldots, z_n]$, so
\[ T_k(X) \cong k [z_1, \ldots, z_n] / I_k(X)\]
and similarly,  $T_k (\mathfrak{X} ) \cong k[z_1, \ldots, z_n] / I_k (\mathfrak{X})$.

We claim $I_k(X) = I_k( \mathfrak{X})$.  Let $J = \C \cdot I_k(X)$ be the ideal generated by $I_k(X)$ in $\C[z_1, \ldots, z_n]$. Notice that $J \subset I(X)$.  For each $1 \leq i \leq m$  take $\sigma_i \in \aut{\C / k}$ with $\sigma_i \cdot I(X)= I(X_i)$. By construction, $J$ is defined over $k$, so $\sigma_i \cdot J = J$. Thus, 
$J \subseteq I(X_i)$ for each $i$ and hence $J \subseteq I(\mathfrak{X} ) = \cap_1^m I(X_i)$.
Unwinding the definitions we have
\[ I_k(X ) = J \cap  k [z_1, \ldots, z_n] \subseteq ( \cap_1^m  I(X_j)  )\cap k[z_1, \ldots, z_n] = I_k ( \mathfrak{X})  \]
and the equality follows since $I(\mathfrak{X}) \subseteq I(X)$. 

Thus $T_k ( X ) \cong T_k (\mathfrak{X})$ and the isomorphism is $k[f]$-linear since $f = \phi ( g)$ with $g \in \Z[z_1, \ldots, z_n ]$. 
\end{proof}

We can now prove Theorem Theorem \ref{thm:fodrk2} and hence Theorem \ref{thm:main}.

\begin{proof}[Proof of Theorem \ref{thm:fodrk2}]

The first part of the theorem is Proposition \ref{prop:eqrk}.  The second part of the theorem follows from Lemmas \ref{lem:conrk} and \ref{lem:x1eqx} as well as an argument identical to the proof of Proposition \ref{prop:eqrk} applied to $T_k ( \mathfrak{X} ) \subseteq \C[\mathfrak{X}]$ \end{proof} 
 
Theorem \ref{thm:main} answers Question \ref{qst1} for $R = \C$ and $\Q$, but not $\Z$.  It turns out, however, that we get no new information from $\rk_X^\Z$.  Recall that if $k$ is a number field, the \textit{ring of integers of $k$}, denoted $\mathcal{O}_k$, is the integral closure of $\Z$ in $k$.  By Theorem \ref{thm:cheese}, if $\alpha$ is not a strongly detected by $X$, then $\rk_X^{\mathcal{O}_{k}} ( \alpha) < \infty$.  We can use Theorem \ref{thm:main} to be explicit about the values of $\rk_X^{\mathcal{O}_k}$. 

\begin{cor}
Take a number field $k$. Suppose $\alpha \in \mathcal{S}$ is not strongly detected by $X$. 
\begin{enumerate}
\item If  $k \supseteq k_0$ and $\alpha$, then $\rk_X^{\mathcal{O}_k}( \alpha ) = \norm{\alpha}$. 
\item If  $k \subseteq k_0$ and $\alpha$, then $\rk_X^{\mathcal{O}_k}( \alpha ) = [k_0:k]\norm{\alpha}$
\end{enumerate}
\label{cor:ringofint}  
\end{cor}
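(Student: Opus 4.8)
The plan is to reduce Corollary~\ref{cor:ringofint} to the already-established values of $\rk_X^k$ by proving the single identity $\rk_X^{\mathcal{O}_k}(\alpha) = \rk_X^k(\alpha)$; granting this, parts (1) and (2) are immediate from Theorem~\ref{thm:main}. As in its proof we may assume $X$ is a curve and take $f = \phi(g)$ to be the finite regular function representing $I_\alpha$, so that $d = \deg f = \norm{\alpha}$. Since $\alpha$ is not strongly detected, Proposition~\ref{prop:cheese1} guarantees $I_\alpha$ is finite, so Theorem~\ref{thm:fodrk2} applies and $\rk_X^k(\alpha)$ equals $\norm{\alpha}$ or $[k_0:k]\norm{\alpha}$ according to the two cases.

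First I would record that $T_R(X)$ depends on the coefficient ring only through scalar extension: because $T_\Z(X)$ is a subring of $\C[X]$ already containing every coordinate function, the $R$-algebra it generates is just its $R$-span $R \cdot T_\Z(X)$. Hence $T_{\mathcal{O}_k}(X) = \mathcal{O}_k \cdot T_\Z(X)$ and $T_k(X) = k \cdot T_\Z(X)$, and with $S = \mathcal{O}_k \setminus \{0\}$ this gives $T_k(X) = S^{-1} T_{\mathcal{O}_k}(X)$. The same bookkeeping yields $k[f] = S^{-1}\mathcal{O}_k[f]$, and since $f$ is transcendental over $\C$ the rings $\mathcal{O}_k[f] \subseteq k[f]$ are domains sharing the common fraction field $k(f)$.

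Next I would settle the finiteness clause built into the definition of $\rk_X^{\mathcal{O}_k}$. Applying Theorem~\ref{thm:cheese} with $R = \Z$, which is legitimate since $\alpha$ is not strongly detected, shows $T_\Z(X)$ is a finitely generated $\Z[f]$-module, say $T_\Z(X) = \sum_{l} \Z[f]\,w_l$. Taking $\mathcal{O}_k$-spans gives $T_{\mathcal{O}_k}(X) = \sum_l \mathcal{O}_k[f]\,w_l$, so $T_{\mathcal{O}_k}(X)$ is a finitely generated $\mathcal{O}_k[f]$-module and in particular $\rk_X^{\mathcal{O}_k}(\alpha) \ne \infty$. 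Consequently the rank is computed as the maximal size of an $\mathcal{O}_k[f]$-linearly independent subset, which for a module over a domain coincides with the dimension of its extension to the fraction field.

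The remaining step, and the one I expect to require the most care, is the rank comparison. Combining the two previous paragraphs,
\begin{align*}
\rk_X^{\mathcal{O}_k}(\alpha) &= \dim_{k(f)}\big( T_{\mathcal{O}_k}(X) \otimes_{\mathcal{O}_k[f]} k(f) \big) \\
&= \dim_{k(f)}\big( T_k(X) \otimes_{k[f]} k(f) \big) = \rk_X^k(\alpha),
\end{align*}
where the middle equality uses flatness of localization: $T_k(X) \otimes_{k[f]} k(f) = (S^{-1}T_{\mathcal{O}_k}(X)) \otimes_{S^{-1}\mathcal{O}_k[f]} k(f) = T_{\mathcal{O}_k}(X) \otimes_{\mathcal{O}_k[f]} k(f)$ because $k(f)$ is the common fraction field of $\mathcal{O}_k[f]$ and $k[f]$. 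Feeding this into Theorem~\ref{thm:main} completes both cases. The subtlety is entirely in the rank bookkeeping: one must verify that the ``$\infty$ if not finitely generated'' alternative is genuinely excluded over $\mathcal{O}_k$, handled above by descent to $\Z$, and that enlarging the base ring from $\mathcal{O}_k[f]$ to its localization $k[f]$, which leaves the fraction field unchanged, does not alter the generic dimension.
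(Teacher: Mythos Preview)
Your proof is correct and follows essentially the same strategy as the paper: reduce to $\rk_X^{\mathcal{O}_k}(\alpha) = \rk_X^k(\alpha)$ by exploiting that $k$ is the fraction field of $\mathcal{O}_k$, then invoke Theorem~\ref{thm:main}. The paper phrases the rank comparison as a bare clear-denominators argument rather than via localization and generic rank, and it is less explicit than you are about the finite-generation clause, dispatching it with a citation of Theorem~\ref{thm:cheese} just before the corollary.
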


\begin{proof}
It is easy to see that $\rk_X^{\mathcal{O}_k} ( \alpha ) \geq \rk_X^k ( \alpha )$.  On the other hand, recall that the field of fractions of $\mathcal{O}_k$ is equal to $k$. Take any subset $\mathcal{B}$ of $T_{\mathcal{O}_k}(X)$ and a linear dependence relation among the elements of $\mathcal{B}$ over $k[I_\alpha]$.  Write the elements of $k$ as fractions in $\mathcal{O}_k$.  Clearing denominators transforms this into a dependence relation over $\mathcal{O}_k$.  Hence, if $\mathcal{B}$ is a maximal linearly indpendent subset of $T_{\mathcal{O}_k } (X)$, it is also linearly independent in $T_k(X)$ and so $\rk_X^k ( \alpha ) = \rk_X^{\mathcal{O}_k} ( \alpha )$. The corollary follows from Theorem \ref{thm:main}.
\end{proof}

\begin{remark} 
Our definition of the rank of a module differs from Chesebro's in \cite{C} when the base ring is not a principle ideal domain.  Thus, Corollary \ref{cor:ringofint} does not contradict Theorem 7.17 in \cite{C}, where Chesebro shows that $T_\Z(X)$ may not be projective over $\Z[ I_\alpha ]$ and computes the minimal number of generators for $T_\Z(X)$ over $\Z[I_\alpha]$  for a specific knot manifold and slope $\alpha$.  In this case, the minimum number of generators is not equal to the rank, so this quantity is more interesting.  In general, however, the minimal number of generators of a non-projective module is much more difficult to compute than its rank. We are currently investigating knot manifolds whose associated modules are not projective. 
\end{remark}

We finish this section by answering Question \ref{qst:2}.  To do so, we recall the $A$-polynomial of \cite{CCGLS} and use the exposition from \cite[Page 3]{BZ1}.  Fix a basis $\{ \mu , \lambda \}$ for $\pie ( \bound N)$.   Consider the collection $\Delta$ of representations $ \rho \co \pie ( N ) \to \SL_2( \C)$ such that $\chi_ \rho \in X$ and $\rho ( \pie ( \bound N))$ is upper triangular.  Then the Zariski-closure of the set 
\[ \bigg\{ (m, l) \in \C^2 \mid \exists \rho \in \Delta \text{ such that } \rho ( \mu ) = \begin{pmatrix}  m & \star \\ 0 & \inv{m} \end{pmatrix} \text{ and } \rho ( \lambda) = \begin{pmatrix} l & \square \\ 0 & \inv{l} \end{pmatrix}  \bigg\} \]
is an irreducible algebraic curve $E \subseteq \C^2$.  The defining equation $A(L, M)$ of $E$ is \textit{the $A$-polynomial of $N$ associated to $X$}. 

Write $A(L, M) = \sum a_{ij} L^i M^j$.    The \textit{Newton polygon} $P$ of $A ( L, M )$ is the convex hull of the set   $\{ (i, j) \in \R^2 \mid a_{ij} \neq 0 \}$ in $\R^2$.  It turns out that the slopes of the edges of $P$ are precisely the boundary slopes strongly detected by $X$ (cf.\ \cite{CCGLS} and \cite{CL}).  More specifically, a slope $\alpha = \{ \mu^{\pm p} \lambda^{\pm q} \}$ is strongly detected by $X$ if and only if $P$ has an edge with slope $p/q$.   

Recall that the unit ball of $\norm{-}$ is a finite sided polygon in $H_1( \bound N, \R) = \R^2$ whose corners are rational multiples of the boundary slopes strongly detected by $X$. Boyer and Zhang discovered that $A (L, M)$ determines the Culler-Shalen norm associated to $X$ by demonstrating that the Newton polygon $P$ is dual to the unit ball of $\norm{-}$  \cite[Section 8]{BZ1}.  In particular, they showed that the Culler-Shalen norm of a slope can be realized as a sum involving the corners of $P$.  Thus, their work implies the following corollary to Theorem \ref{thm:main}.  

\begin{cor} 
Suppose $X \subseteq X(N)$ is a norm curve. If the $A$-polynomial of $N$ associated to $X$ is known, then one can calculate $\rk_X^\C ( \alpha )$ for every slope $\alpha$ on $\bound N$.  
\label{cor:compute}
\end{cor}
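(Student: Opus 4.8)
The plan is to reduce the corollary to three facts already assembled in the paper: Theorem \ref{thm:cheese}, which pins down exactly when $\rk_X^\C(\alpha) = \infty$; Theorem \ref{thm:main}, which evaluates $\rk_X^\C(\alpha)$ at the remaining slopes as the Culler--Shalen norm $\norm{\alpha}$; and the Boyer--Zhang duality, which expresses $\norm{\alpha}$ as an explicit combinatorial quantity attached to the Newton polygon $P$ of $A(L,M)$. The entire argument is a case split on whether $\alpha$ is strongly detected by $X$, and the only work is to check that each case is visibly computable from $A(L,M)$.

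First I would dispose of the closed-surface alternative. Since $X$ is a norm curve, it does not detect a closed essential surface; I would record this as an input from Culler--Shalen theory (cf.\ \cite{CGLS}). This places us squarely in the second alternative of Theorem \ref{thm:cheese}, so that $\rk_X^\C(\alpha) = \infty$ if and only if $X$ detects $\alpha$, i.e.\ if and only if $\alpha$ is a strongly detected boundary slope. By the exposition preceding the corollary, a slope $\alpha = \{\mu^{\pm p}\lambda^{\pm q}\}$ is strongly detected precisely when $P$ has an edge of slope $p/q$. Hence the locus $\{\alpha : \rk_X^\C(\alpha) = \infty\}$ is read directly off the edges of $P$.

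For the slopes that are not strongly detected, $I_\alpha$ is a finite regular function on $X$ by Proposition \ref{prop:cheese1}(2), so the algebro-geometric machinery behind Theorem \ref{thm:main} applies: Theorem \ref{thm:fodrk2}(1) with $k = \C$ gives that $T_\C(X) = \C[X]$ is free of rank $d = \deg I_\alpha$ over $\C[I_\alpha]$, whence $\rk_X^\C(\alpha) = d = \norm{\alpha}$ by Theorem \ref{thm:csnorm}(1). The only property of $\alpha$ used here is finiteness of $I_\alpha$, so this covers every non-strongly-detected slope, including boundary slopes of $N$ not detected by $X$, even though these fall outside the literal statement of Theorem \ref{thm:main}. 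Finally, Boyer and Zhang's theorem that $P$ is dual to the unit ball of $\norm{-}$ realizes $\norm{\alpha}$ as a sum over the corners of $P$, so this value is computed from $A(L,M)$ as well. Combining the two cases yields a closed-form recipe for $\rk_X^\C(\alpha)$ at every slope in terms of $P$.

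The step I expect to require the most care is the passage from Theorem \ref{thm:main} to the boundary slopes of $N$ that happen not to be strongly detected by $X$: these lie outside the literal hypotheses of Theorem \ref{thm:main}, and one must confirm that its proof genuinely uses only finiteness of $I_\alpha$ rather than the stronger assumption that $\alpha$ bounds no essential surface at all. A secondary but essential point is the input that a norm curve detects no closed essential surface, without which the value $\infty$ could occur at slopes invisible to $P$ and the computation from $A(L,M)$ alone would break down.
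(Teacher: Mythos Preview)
Your approach matches the paper's, which does not give a formal proof: the corollary is simply announced as a consequence of the Boyer--Zhang duality between the Newton polygon $P$ and the Culler--Shalen norm ball, together with Theorem~\ref{thm:main}. You are in fact more explicit than the paper on one point: you observe that Theorem~\ref{thm:main} literally excludes all boundary slopes of $N$, whereas the corollary must also cover boundary slopes not strongly detected by $X$, and you correctly resolve this by noting that the proof of Theorem~\ref{thm:fodrk2} uses only finiteness of $I_\alpha$, which Proposition~\ref{prop:cheese1}(2) supplies at such slopes.

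One caution: the assertion that a norm curve never detects a closed essential surface is not something you can simply import from \cite{CGLS}. Being a norm curve says only that each peripheral trace function is non-constant; it does not preclude an ideal point at which every peripheral trace function stays bounded, and that is exactly the mechanism for closed-surface detection. The paper's informal derivation glosses over this same point, so you are no less careful than the source, but you should not present it as an established input. The cleanest reading is that the corollary tacitly inherits the standing hypothesis of Theorem~\ref{thm:main} that $X$ detects no closed essential surface; under that reading your argument is complete.
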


The $A$-polynomial has been calculated for many knot manifolds, including a few infinite families (cf.\ \cite{HS}). Marc Culler has also established a database of $A$-polynomials which you can find at this link: \url{http://homepages.math.uic.edu/~culler/Apolynomials/}. Moreover, the $A$-polynomial and hence the values of $\rk_X^\C$ are, in principle, computable for any knot manifold using classic elimination theory.

\bibliographystyle{plain}
\bibliography{fod}

\end{document}